\theoremstyle{plain}
\newtheorem{thm}{Theorem}
\newtheorem{lem}[thm]{Lemma}
\newtheorem{prop}[thm]{Proposition}
\theoremstyle{definition}
\newtheorem{defn}[thm]{Definition}
\newtheorem{exmp}[thm]{Example}
\newtheorem{rem}[thm]{Remark}
\newtheorem*{conventionsrings}{Conventions on rings}
\newtheorem*{conventionscategories}{Conventions on categories}
\DeclareMathOperator{\ob}{ob}
\DeclareMathOperator{\mor}{mor}
\DeclareMathOperator{\Aut}{Aut}
\DeclareMathOperator{\id}{id}
\DeclareMathOperator{\Supp}{Supp}
\DeclareMathOperator{\Ann}{Ann}
\DeclareMathOperator{\Top}{Top}
\DeclareMathOperator{\Ring}{Ring}
\DeclareMathOperator{\op}{op}
\DeclareMathOperator{\opsqr}{sqr}
\DeclareMathOperator{\opsqrt}{sqrt}
\DeclareMathOperator{\opabs}{abs}
\begin{document}

\title{Simple Skew Category Algebras Associated with Minimal Partially Defined Dynamical Systems}

\date{}

\author{Patrik Lundstr\"{o}m\footnote{Address: University West, Department of Engineering Science, SE-46186 Trollh\"{a}ttan, Sweden, E-mail: Patrik.Lundstrom@hv.se} \and Johan \"{O}inert\footnote{Address: Department of Mathematical Sciences, University of Copenhagen, Universitetsparken 5, DK-2100 Copenhagen \O, Denmark, E-mail: oinert@math.ku.dk}}


\maketitle

\begin{abstract}
In this article, we continue our study
of category dynamical systems, that is
functors $s$ from a category $G$ to $\Top^{\op}$,
and their corresponding skew category algebras.
Suppose that the
spaces $s(e)$, for $e \in \ob(G)$,
are compact Hausdorff. 
We show that if
(i) the skew category algebra is simple, then
(ii) $G$ is inverse connected,
(iii) $s$ is minimal and
(iv) $s$ is faithful.
We also show that if 
$G$ is a locally abelian groupoid,
then (i) is equivalent to (ii), (iii) and (iv).
Thereby, we generalize results
by \"{O}inert for skew group algebras to a large class of 
skew category algebras.
\end{abstract}


\pagestyle{headings}

\section{Introduction}

Ever since the classical papers on ergodic theory
and $C^*$-crossed products
by Murray and von Neumann (see e.g. \cite{mur36},
\cite{mur43} and \cite{von61}),
we have known that there is a connection
between topological properties of spaces and algebraical
properties of rings.
It has been observed by several authors that
there is a link between 
freness of topological dynamical systems and
ideal intersection properties of $C^*$-algebras
(see e.g. Zeller-Meier \cite{zel68}, Effros, Hahn \cite{effros67},
Elliott \cite{elliot80}, Archbold,
Quigg, Spielberg \cite{archbold93, quigg92, spielberg91},
Kawamura, Kishimoto, Tomiyama \cite{kawamura90, kishimoto81, tomiyama92}).
A lot of attention has also been given the 
connection between minimality of topological
dynamical systems and simplicity of 
the corresponding $C^*$-algebras
(see e.g. \cite{davidson96}, \cite{power78},
\cite{tomiyama87}, \cite{tomiyama92} and 
\cite{williams07}).
To be more precise,
suppose that $X$ is a topological space and
$s : X \rightarrow X$ is a continuous function;
in that case the pair $(X,s)$ is called a
\emph{topological dynamical system}.
A subset $Y$ of $X$ is called \emph{invariant} if
$s(Y) \subseteq Y$. The topological dynamical system
$(X,s)$ is called \emph{minimal} if there is no invariant
closed nonempty proper subset of $X$.
An element $x \in X$ is called
\emph{periodic} if there is a positive integer $n$
such that $s^n(x)=x$; an element of $X$ which is not periodic
is called \emph{aperiodic}.
Recall that the topological dynamical system
$(X,s)$ is called {\it topologically free}
if the set of aperiodic elements of $X$ is dense in $X$.
Now suppose that $s$ is a homeomorphism
of a compact Hausdorff space $X$.
Denote by $C(X)$ the unital $C^*$-algebra of continuous complex-valued functions
on $X$ endowed with the supremum norm, pointwise addition and multiplication,
and pointwise conjugation as involution.
The map $\sigma_s : C(X) \rightarrow C(X)$
which to a function $f \in C(X)$ associates $f \circ s \in C(X)$
is then an automorphism of $C(X)$.
The action of $\sigma_s$ on $C(X)$ extends in a unique
way to a strongly continuous representation $\sigma : {\Bbb Z} \rightarrow \Aut(C(X))$
subject to the condition that $\sigma(1) = \sigma_s$,
namely $\sigma(k) = \sigma_s^k$, for $k \in {\Bbb Z}$.
In that case, the associated transformation group
$C^*$-algebra $C^*(X,s)$ can be constructed
(see e.g. \cite{bla06} or \cite{pedersen79} for the details).
In 1978 Power showed the following
elegant result connecting
simplicity of $C^*(X,s)$ to minimality of $s$.

\begin{thm}[Power \cite{power78}]\label{powertheorem}
If $s$ is a homemorphism of a compact
Hausdorff
space $X$ of infinite cardinality,
then $C^*(X,s)$ is simple
if and only if $(X,s)$ is minimal.
\end{thm}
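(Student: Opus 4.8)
The plan is to establish the two implications separately, working with the standard realization $C^*(X,s) = C(X) \rtimes_{\sigma} \Z$ of the transformation group $C^*$-algebra, equipped with its canonical unitary $u$ satisfying $u f u^{-1} = \sigma_s(f) = f \circ s$ for $f \in C(X)$ and its canonical faithful conditional expectation $E \colon C^*(X,s) \to C(X)$. Throughout I will use that every closed two-sided ideal of the commutative $C^*$-algebra $C(X)$ has the form $I_Y = \{ f \in C(X) : f|_Y = 0 \}$ for a unique closed set $Y \subseteq X$, that $I_Y \neq 0$ if and only if $Y \neq X$, and that $I_Y$ is $\sigma$-invariant precisely when $s(Y) = Y$.

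For the direction ``$(X,s)$ not minimal $\Rightarrow C^*(X,s)$ not simple'', pick a nonempty proper closed subset $Y \subsetneq X$ with $s(Y) \subseteq Y$. Since $s$ is a homeomorphism of the compact Hausdorff space $X$, the sets $s^n(Y)$ for $n \geq 0$ form a decreasing chain of nonempty closed sets, so $Z := \bigcap_{n \geq 0} s^n(Y)$ is nonempty, closed, proper, and satisfies $s(Z) = Z$. Hence $I_Z$ is a nonzero proper $\sigma$-invariant ideal of $C(X)$, and the ideal it generates in $C^*(X,s)$ has quotient identifiable with the transformation group $C^*$-algebra $C^*(Z, s|_Z)$, which is nonzero because $Z \neq \emptyset$; the ideal itself is nonzero because $I_Z \neq 0$. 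Therefore $C^*(X,s)$ has a nontrivial ideal.

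For the direction ``$(X,s)$ minimal $\Rightarrow C^*(X,s)$ simple'' I would first note that minimality together with $|X| = \infty$ rules out periodic points: if $s^n(x_0) = x_0$ for some $x_0 \in X$ and some $n \geq 1$, then the finite orbit $\{ x_0, s(x_0), \dots, s^{n-1}(x_0) \}$ is a nonempty closed invariant set, hence equals $X$ by minimality, forcing $X$ to be finite. Thus the $\Z$-action is free. Now let $J \neq 0$ be a closed two-sided ideal of $C^*(X,s)$; the crux is the claim that $J \cap C(X) \neq 0$. Granting it, $J \cap C(X)$ is a nonzero closed ideal of $C(X)$, and it is $\sigma$-invariant because $u(J \cap C(X))u^{-1}$ and $u^{-1}(J \cap C(X))u$ are again contained in $J \cap C(X)$; hence $J \cap C(X) = I_Y$ with $Y \neq X$ and $s(Y) = Y$, so minimality forces $Y = \emptyset$ and $1 \in J$, i.e. $J = C^*(X,s)$. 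Simplicity follows.

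The remaining claim $J \cap C(X) \neq 0$ is the heart of the matter, and freeness (topological freeness would in fact suffice) is exactly what makes it work; I would argue by contradiction. If $J \cap C(X) = 0$, then the quotient map $q \colon C^*(X,s) \to C^*(X,s)/J$ restricts to an isometric embedding of $C(X)$. Choose $0 \neq a \in J$ and set $b := a^*a \in J$; then $b \geq 0$, $b \neq 0$, and $\delta := \|E(b)\| > 0$ by faithfulness of $E$, with $E(b)(x_0) = \delta$ for some $x_0 \in X$. Fix $\varepsilon \in (0, \delta/3)$ and approximate $b$ within $\varepsilon$ by a finite sum $c = \sum_{|k| \leq N} c_k u^k$ with $c_k \in C(X)$; since $E$ is contractive, $|E(c)(x_0)| > \delta - \varepsilon$. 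By freeness the points $s^k(x_0)$ for $|k| \leq N$ are pairwise distinct, so normality of $X$ yields $g \in C(X)$ with $0 \leq g \leq 1$, $g(x_0) = 1$, and $\Supp(g)$ small enough that $s^k(\Supp(g)) \cap \Supp(g) = \emptyset$ for $1 \leq |k| \leq N$. Using $u^k g = (g \circ s^k) u^k$, every off-diagonal term of $gcg$ then vanishes, so $gcg = g^2 E(c) \in C(X)$, while $gbg \in J$ and $\|gbg - g^2 E(c)\| \leq \|b - c\| < \varepsilon$. Applying $q$, using $q(gbg) = 0$ and the isometry of $q$ on $C(X)$, we get $\delta - \varepsilon < |E(c)(x_0)| \leq \|g^2 E(c)\| = \|q(g^2 E(c))\| = \|q(g^2 E(c) - gbg)\| < \varepsilon$, whence $\delta < 2\varepsilon < 2\delta/3$, a contradiction. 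The points I expect to be routine are the identification of the quotient of $C^*(X,s)$ by an invariant-ideal-generated ideal with $C^*(Z,s|_Z)$, the construction and faithfulness of $E$, and the bookkeeping giving $gcg = g^2 E(c)$; the substantive step is the separation property $J \cap C(X) \neq 0$ just sketched.
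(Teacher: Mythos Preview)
Your proof is correct and follows the standard route to Power's theorem: one direction via the ideal associated to a proper closed invariant set, the other via freeness of the action (forced by minimality on an infinite space) together with the faithful conditional expectation $E$ and a bump-function argument to show that every nonzero closed ideal meets $C(X)$. This is essentially the argument in Power's original paper and in textbook treatments such as Davidson or Tomiyama.

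However, there is no proof in the paper for you to compare against. Theorem~\ref{powertheorem} is quoted in the introduction purely as motivation and is attributed to \cite{power78} without any argument. The paper's own work (Theorem~\ref{genmaintheorem}, Propositions~\ref{simplicity} and~\ref{simplicitycommutativeA}) concerns the \emph{algebraic} skew category algebra $[\oplus_{e} C(e)] \rtimes^{\sigma} G$, not the $C^*$-crossed product $C^*(X,s)$. The two are related---the algebraic crossed product sits as a dense $*$-subalgebra of the $C^*$-one when $G=\Z$---but simplicity of one does not formally imply simplicity of the other, and the paper never claims to recover Power's $C^*$-statement from its results. So your argument stands on its own as a valid proof of the cited background theorem, while the paper's techniques (purely ring-theoretic, with no completion, no conditional expectation, and no norm estimates) live in a different setting and are not directed at Theorem~\ref{powertheorem} at all.
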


Inspired by Theorem \ref{powertheorem},
the second author of the current article
has in \cite{oinert09} and \cite{oinertarxiv11} 
shown analogous results (see Theorem \ref{maintheorem})
relating properties of an arbitrary topological dynamical system $(X,s)$,
where $s$ is a group homomorphism from a group $G$ 
to $\Aut(X)$, to ideal properties of the skew group algebra
$C(X) \rtimes^{\sigma} G$.
For more details concerning
skew group algebras, and, more generally,
skew category algebras, see Section \ref{skewcategoryalgebras}.

\begin{thm}[Öinert \cite{oinert09} and \cite{oinertarxiv11}]\label{maintheorem}
Suppose that $X$ is a compact
Hausdorff 
space and $s : G \rightarrow \Aut(X)$
is a group homomorphism.
Consider the following three assertions:
\begin{itemize}

\item[{\rm (i)}] $C(X) \rtimes^{\sigma} G$ is simple;

\item[{\rm (ii)}] $s$ is minimal;

\item[{\rm (iii)}] $s$ is faithful.

\end{itemize}
The following conclusions hold:
\begin{itemize}

\item[{\rm (a)}] {\rm (i)} implies {\rm (ii)} and {\rm (iii)};

\item[{\rm (b)}] if $G$ is an abelian group,
then {\rm (i)} holds if and only if {\rm (ii)} and {\rm (iii)} hold;

\item[{\rm (c)}] if $G$ is isomorphic to the additive group
of integers and $X$ has infinite cardinality,
then {\rm (i)} holds if and only if {\rm (ii)} holds.

\end{itemize}
\end{thm}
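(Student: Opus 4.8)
The plan is to analyse the two-sided ideals of the skew group algebra $A := C(X)\rtimes^{\sigma}G = \bigoplus_{g\in G}C(X)u_{g}$, which is free as a left $C(X)$-module on the canonical unitaries $u_{g}$, with $u_{g}u_{h}=u_{gh}$, $u_{e}=1$ and $u_{g}f=\sigma_{g}(f)u_{g}$, where $\sigma_{g}$ is the automorphism of $C(X)$ induced by $s(g)$; since $X$ is compact Hausdorff, $C(X)$ separates points (so $\sigma_{g}=\id$ iff $s(g)=\id_{X}$), its maximal ideals are the $M_{x}=\{f\in C(X):f(x)=0\}$, and a closed $Y\subseteq X$ is $s$-invariant iff $\{f:f|_{Y}=0\}$ is $\sigma$-invariant. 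For (a) I argue by contraposition. If $s$ is not minimal, pick a nonempty proper closed invariant $Y$; then $\bigoplus_{g}\{f:f|_{Y}=0\}u_{g}$ is a nonzero proper two-sided ideal of $A$. If $s$ is not faithful, put $H:=\ker s\neq\{e\}$; as $\sigma_{g}$ depends only on $s(g)$ one has $H=\ker(g\mapsto\sigma_{g})$, so $\sigma$ descends to $G/H$ and there is a surjective algebra homomorphism $A\to C(X)\rtimes^{\sigma}(G/H)$, $fu_{g}\mapsto fu_{gH}$, onto a nonzero ring; its kernel is proper, and nonzero since it contains $u_{h}-1$ for any $e\neq h\in H$. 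Either way $A$ is not simple, which is (a).

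For (b), assume $G$ is abelian and that (ii) and (iii) hold. The observation that makes the abelian case work --- and, I expect, the conceptual core --- is that minimality plus faithfulness upgrade topological freeness to genuine freeness: for $g\neq e$ the fixed-point set $X^{g}=\{x:s(g)x=x\}$ is closed and, $G$ being abelian, invariant, hence empty or all of $X$ by minimality; but $X^{g}=X$ would force $s(g)=\id$, against faithfulness, so $s(g)$ is fixed-point free for every $g\neq e$. Granting this, it suffices to prove that every nonzero two-sided ideal $I\subseteq A$ satisfies $I\cap C(X)\neq 0$. Indeed, $I\cap C(X)$ is then a nonzero $\sigma$-invariant ideal of $C(X)$ (as $\sigma_{g}(I\cap C(X))=u_{g}(I\cap C(X))u_{g}^{-1}\subseteq I\cap C(X)$), and it lies in no $M_{x}$: if it did, every $f\in I\cap C(X)$, together with all $\sigma_{g}(f)$, would vanish at $x$, i.e. $f$ would vanish on the orbit $G\cdot x$ and hence, by continuity and minimality (which makes $\overline{G\cdot x}=X$), be $0$, contradicting $I\cap C(X)\neq 0$. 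Thus $I\cap C(X)=C(X)$, so $1\in I$ and $I=A$; this proves simplicity, and together with (a) we get (b).

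The remaining point, $I\cap C(X)\neq 0$, is the step I expect to be the main obstacle, and the plan is a minimal-support argument. Among the nonzero elements of $I$ choose $a=\sum_{g\in F}f_{g}u_{g}$ with $F=\{g:f_{g}\neq 0\}$ finite of least possible cardinality; replacing $a$ by $au_{g_{0}}^{-1}$ for some $g_{0}\in F$, we may assume $e\in F$. If $|F|=1$, then $a\in(I\cap C(X))\setminus\{0\}$ and we are done, so suppose $|F|\geq 2$ and fix $g\in F\setminus\{e\}$. For every $h\in C(X)$ the element $ha-ah\in I$ has vanishing $u_{e}$-coefficient (because $hf_{e}=f_{e}h$), hence support contained in $F\setminus\{e\}$, hence is $0$ by minimality of $|F|$; comparing $u_{g}$-coefficients gives $f_{g}\cdot(h-\sigma_{g}(h))=0$ for all $h$. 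Choosing $x$ with $f_{g}(x)\neq 0$ we obtain $h(x)=h(s(g)^{-1}x)$ for all $h\in C(X)$, so $x=s(g)^{-1}x$ since $C(X)$ separates points --- contradicting that $s(g)$ is fixed-point free. Hence $|F|=1$, as required.

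For (c), with $G\cong\Z$ and $X$ infinite: since $\Z$ is abelian, by (b) and (a) it suffices to show that (ii) implies (iii). If $s$ were not faithful, then $\ker s=n\Z$ for some $n\geq 1$ and $s$ would factor through the finite group $\Z/n\Z$; but a minimal action of a finite group on a Hausdorff space has every orbit finite, hence closed, hence equal to $X$ by minimality, so $X$ would be finite, a contradiction. Therefore $s$ is faithful, and (i)$\Leftrightarrow$(ii) follows from (b).
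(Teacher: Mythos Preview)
Your argument is correct. The paper does not give a self-contained proof of this theorem (it is cited from \cite{oinert09} and \cite{oinertarxiv11}), but it does prove the generalization Theorem~\ref{genmaintheorem}, and specializing that proof to a one-object groupoid yields a proof of the present statement which one can compare against yours.

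For (a) and (c) your arguments coincide with the paper's in substance: the non-minimal case is exactly Proposition~\ref{equivalentminimal1}, the non-faithful case is the content of Proposition~\ref{estonia}/Proposition~\ref{faithfullocallytrivial} rephrased via the quotient map to $C(X)\rtimes^{\sigma}(G/H)$, and your (c) is the faithfulness half of Proposition~\ref{minimalimpliesweaklytopfree}.

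For (b) there is a genuine difference in route. You make the pointwise observation that for abelian $G$, minimality plus faithfulness force the action to be \emph{free} (each $X^{g}$ is closed and $G$-invariant, hence empty), and then run a minimal-support/commutator argument directly against freeness to get $I\cap C(X)\neq 0$; a second use of minimality (orbit closures are all of $X$) then forces $I\cap C(X)=C(X)$. The paper's proof of Theorem~\ref{genmaintheorem}(b) instead shows that $Z\bigl(C(X)\rtimes^{\sigma}G\bigr)=\mathbb{C}$ and applies Proposition~\ref{simplicity}(b), whose minimal-support argument concludes that the minimal element lies in the center and is therefore invertible. Your path is more elementary and transparent in the group case and avoids computing the center; the paper's path is more modular and is what makes the extension to locally abelian groupoids (Proposition~\ref{simplicity}(b)) go through uniformly. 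In effect your argument is the specialization to groups of the alternative route via Proposition~\ref{eqconditions}/Proposition~\ref{simplicitycommutativeA} (maximal commutativity), using that a free action is in particular topologically free.
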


For related results concerning the ideal structure in
skew group algebras and, more generally, group graded rings
and Ore extensions, see
\cite{oinricsil11}, \cite{oin06}
\cite{oin07}, \cite{oin08}, \cite{oin10},
\cite{svesildejeu07}, \cite{svesildejeu08} 
and \cite{svesildejeu09}.

A natural question is if there
is a version of Theorem \ref{maintheorem} that holds
for dynamics defined by families of
{\it partial} functions on a space, that is
functions that do not necessarily
have the same domain or codomain.
In this article we address this question by
using the machinery developed by the authors 
in \cite{pajo11} for {\it category dynamical systems}.
These are defined by families, stable under composition,
of continuous maps between potentially
different topological spaces.
We show a generalization of Theorem \ref{maintheorem}
to the case of skew category algebras defined by these maps
and spaces (see Theorem \ref{genmaintheorem} below).
To be more precise, suppose that $G$ is a category.
The family of objects of $G$ is denoted by $\ob(G)$;
we will often identify an object in $G$ with
its associated identity morphism.
The family of morphisms in $G$ is denoted by $\mor(G)$;
by abuse of notation, we will often write $n \in G$
when we mean $n \in \mor(G)$.
Throughout the article $G$ is assumed to be small, that is
with the property that $\mor(G)$ is a set.
The domain and codomain of a morphism $n$ in $G$ is denoted by
$d(n)$ and $c(n)$ respectively.
We let $G^{(2)}$ denote the collection of composable
pairs of morphisms in $G$, that is all $(m,n)$ in
$\mor(G) \times \mor(G)$ satisfying $d(m)=c(n)$.
For each $e \in \ob(G)$, let $G_e$ denote the
collection of $n \in \mor(G)$ with $d(n)=c(n)=e$.
Let $G^{\op}$ denote the opposite category of $G$.
We let $\Top$ denote the category
having topological spaces as objects
and continuous functions as morphisms.
Suppose that $s : G \rightarrow \Top^{\op}$ is a (covariant) functor;
in that case we say that $s$ is a {\it category dynamical system}.
If $G$ is a groupoid, that is a category
where all morphisms are isomorphisms, then
we say that $s$ is a 
{\it groupoid dynamical system}\footnote{The notion {\it groupoid dynamical system} 
is in the $C^*$-algebra literature used
in a sense different from ours, see e.g. 
\cite{masuda1} and \cite{masuda2}.}.
If $e \in \ob(G)$, then
we say that an element $x \in s(e)$ is \emph{periodic}
if there is a nonidentity $n : e \rightarrow e$ in $G$
such that $s(n)(x)=x$;
an element of $s(e)$ which is not periodic
is called \emph{aperiodic}.
We say that $s$ is {\it topologically free} if for each $e \in \ob(G)$,
the set of aperiodic elements of $s(e)$ is dense in $s(e)$.
We say that $s$ is {\it minimal} if for each $e \in \ob(G)$,
there is no nonempty proper closed subset $Y$ 
of $s(e)$ such that $s(n)(Y) \subseteq Y$ for all 
$n \in G_e$. 
We say that $s$ is {\it faithful} if for each $e \in \ob(G)$,
and each nonidentity $n \in G_e$, there is
$x \in s(e)$ such that $s(n)(x) \neq x$.
For each $e \in \ob(G)$, we
let $C(e)$ denote the set of continuous complex valued
functions on $s(e)$.
For each $n \in G$ the functor $s$ induces a map
$\sigma(n) : C(d(n)) \rightarrow C(c(n))$
by the relation $\sigma(n)(f) = f \circ s(n)$,
for $f \in C(d(n))$.
If we use the terminology introduced in Section \ref{skewcategoryalgebras},
then the map $\sigma$ defines a {\it skew category system}
(see Definition \ref{defskewcategorysystem}).
In the same section,
we show how one to
each skew category system may associate a so called
{\it skew category algebra} $A \rtimes^{\sigma} G$ (see also \cite{oinlun08}
for a more general construction)
where $A = \oplus_{e \in \ob(G)} C(e)$.
In Section \ref{skewcategoryalgebras},
we obtain two results concerning simplicity
of a general skew category algebra $A \rtimes^{\sigma} G$.
(see Proposition \ref{simplicity} and
Proposition \ref{simplicitycommutativeA}).
These results are applied to
category dynamical systems in Section \ref{catdynsyst},
where we show the following generalization
of Theorem \ref{maintheorem} from groups to categories.

\begin{thm}\label{genmaintheorem}
Let $s : G \rightarrow \Top^{\op}$ be a category
dynamical system with the property that 
for each $e \in \ob(G)$, the 
space $s(e)$
is compact 
Hausdorff.
Consider the following four assertions:
\begin{itemize}

\item[{\rm (i)}] $\left[ \oplus_{e \in \ob(G)} C(e) \right] \rtimes^{\sigma} G$ is simple;

\item[{\rm (ii)}] $G$ is inverse connected;

\item[{\rm (iii)}] $s$ is minimal;

\item[{\rm (iv)}] $s$ is faithful.

\end{itemize}
The following conclusions hold:
\begin{itemize}

\item[{\rm (a)}] {\rm (i)} implies {\rm (ii)}, {\rm (iii)} and {\rm (iv)};

\item[{\rm (b)}] if $G$ is a locally abelian groupoid,
then {\rm (i)} holds if and only if {\rm (ii)}, {\rm (iii)}
and {\rm (iv)} hold;

\item[{\rm (c)}] if $G$ is a groupoid with the property that
for each $e \in \ob(G)$ the 
space $s(e)$
is infinite and the group $G_e$ is isomorphic to
${\Bbb Z}$, then {\rm (i)} holds if and only 
if {\rm (ii)} and {\rm (iii)} hold.

\end{itemize}
\end{thm}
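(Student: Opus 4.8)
The plan is to reduce everything to the two abstract simplicity criteria for a general skew category algebra (Propositions \ref{simplicity} and \ref{simplicitycommutativeA}) and to the known group-theoretic result (Theorem \ref{maintheorem}), applied component-wise over the connected components of $G$. The key observation is that the algebra $A\rtimes^\sigma G$ with $A=\oplus_{e\in\ob(G)}C(e)$ decomposes, as a non-unital ring, according to the decomposition of $G$ into inverse-connected components; a two-sided ideal of $A\rtimes^\sigma G$ restricts to an ideal on each block, so simplicity of the whole forces $G$ to have a single inverse-connected component together with simplicity of that block. This is where assertion (ii) will come from in part (a), and it is also the structural fact that lets us import Theorem \ref{maintheorem} in parts (b) and (c): once $G$ is inverse connected (and, in (b) and (c), a groupoid), fixing a base object $e_0$ and using the isomorphisms $s(n):s(c(n))\to s(d(n))$ to identify all the spaces $s(e)$ with $X:=s(e_0)$, the skew category algebra becomes Morita-equivalent (indeed isomorphic, up to the matrix bookkeeping coming from the arrows between distinct objects) to the skew group algebra $C(X)\rtimes^\sigma G_{e_0}$. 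Under this identification \emph{minimality} of $s$ corresponds to minimality of the $G_{e_0}$-action on $X$, and \emph{faithfulness} of $s$ corresponds to faithfulness of that action.

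For part (a), I would argue in three steps. First, simplicity of $A\rtimes^\sigma G$ implies, by the block decomposition just described, that $G$ is inverse connected, giving (ii). Second, for (iv): if $s$ fails to be faithful, there is $e\in\ob(G)$ and a nonidentity $n\in G_e$ with $s(n)=\id_{s(e)}$, hence $\sigma(n)=\id_{C(e)}$; this produces a nontrivial commuting relation inside $A\rtimes^\sigma G$ that one uses, via Proposition \ref{simplicity}, to manufacture a proper nonzero ideal — exactly the mechanism by which non-faithfulness obstructs simplicity in the skew group algebra case. Third, for (iii): if $s$ is not minimal, pick $e$ and a nonempty proper closed $G_e$-invariant $Y\subseteq s(e)$; the ideal $I(Y)=\{f\in C(e):f|_Y=0\}$ is a proper nonzero $\sigma$-invariant ideal of $C(e)$, and one checks that the ideal of $A$ it generates (zero on all other components, $I(Y)$ on the $e$-component, then saturated under the $G$-action using inverse connectedness) is $G$-invariant, so by Proposition \ref{simplicity} it yields a proper nonzero ideal of $A\rtimes^\sigma G$, contradicting simplicity.

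For part (b), assume $G$ is a locally abelian groupoid. The forward direction is (a). For the converse, assume (ii), (iii), (iv). Inverse connectedness plus the groupoid hypothesis let me collapse to a single block and a single vertex group $G_{e_0}$, which is abelian by hypothesis, acting on $X=s(e_0)$; (iii) and (iv) translate into minimality and faithfulness of this action, so Theorem \ref{maintheorem}(b) gives that $C(X)\rtimes^\sigma G_{e_0}$ is simple, and transporting this back through the Morita/matrix-amplification identification gives simplicity of $A\rtimes^\sigma G$. Here one must be a little careful that the identification does interact well with ideals — i.e. that the "amplification" of the skew group algebra over the arrow set of the connected groupoid preserves simplicity — but this is a standard matrix-units argument. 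Part (c) is identical in structure: the groupoid and $G_e\cong\Z$ hypotheses again reduce us to a single vertex, where $X$ is infinite and the acting group is $\Z$, so Theorem \ref{maintheorem}(c) (which does not need faithfulness) closes the loop, using that for a faithful $\Z$-action minimality already forces infinitely many orbits, etc.

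The main obstacle I anticipate is \emph{not} the dynamical translation but the bookkeeping in the reduction from a skew category algebra over an inverse-connected groupoid to the skew group algebra over a single vertex group: one has to verify that choosing connecting isomorphisms $s(n)$ is harmless up to isomorphism of algebras (a cocycle/coherence check), that the resulting algebra is a genuine matrix-style amplification of $C(X)\rtimes^\sigma G_{e_0}$, and — crucially for the "only if" directions of (b) and (c) — that this amplification preserves \emph{and reflects} simplicity, so that the abstract Propositions of Section \ref{skewcategoryalgebras} and the group-level Theorem \ref{maintheorem} can be shuttled back and forth. Getting the non-unital subtleties right (the algebra has local units $\id_e$ rather than a global $1$, and "simple" must be read accordingly) is the place where the argument could go wrong if one is careless.
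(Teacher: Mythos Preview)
Your argument for part~(a) is essentially the paper's: both route through Proposition~\ref{simplicity}(a) to obtain inverse connectedness, $G_e$-simplicity of each $C(e)$, and local triviality of $\ker(\sigma)$, and then translate the latter two into minimality and faithfulness via the obvious dictionary (Propositions~\ref{equivalentminimal1} and~\ref{faithfullocallytrivial} in the paper). Your globalization step for (iii) --- saturating $I(Y)$ to a $G$-invariant ideal of $A$ --- is not needed, since Proposition~\ref{simplicity}(a)(iii) already gives $G_e$-simplicity of each $A_e$ directly; but this is cosmetic.

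For parts~(b) and~(c), however, you take a genuinely different route. You propose to collapse a connected groupoid to a single vertex group via a choice of connecting arrows, identify $A\rtimes^\sigma G$ with a matrix amplification of $C(X)\rtimes^\sigma G_{e_0}$, and then invoke Theorem~\ref{maintheorem} as a black box. The paper does \emph{not} do this. Instead it stays at the level of the skew category algebra and applies Proposition~\ref{simplicity}(b) directly: given (ii)--(iv), it verifies the remaining hypothesis of that proposition by computing $Z(C(e)\rtimes^\sigma G_e)=\mathbb{C}$ explicitly, using minimality (to force any $G_e$-invariant continuous function to be constant) and faithfulness (to kill the coefficients of $u_n$ for $n\neq e$). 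Part~(c) is then obtained from part~(b) together with Proposition~\ref{minimalimpliesweaklytopfree}, which shows that under the $\mathbb{Z}$-and-infinite hypothesis minimality already forces faithfulness.

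Both approaches are correct. Yours is structurally cleaner --- it makes transparent that the groupoid case contains no new phenomena beyond the group case plus Morita bookkeeping --- but it leans on Theorem~\ref{maintheorem} and on the simplicity-reflecting property of the (possibly non-unital) matrix amplification, which is exactly the local-units issue you flag. The paper's approach is self-contained: it never appeals to Theorem~\ref{maintheorem}, proving the category version from scratch via the abstract center computation, and thereby sidesteps entirely the cocycle/coherence and non-unital Morita checks you anticipate as the main obstacle.
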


Various types of crossed product algebras associated to groupoid dynamical
systems, and even more general types of dynamical systems, have appeared
in the literature before (see e.g.
\cite{exelvershik06,masuda1,masuda2,paterson99}).
The difference between these algebras and our algebras is that, generally
speaking, our skew category algebras are defined in an algebraic way,
without making use of any topology.
For example, by choosing $A$ to be a $C^*$-algebra and our category $G$ to
be a locally compact group acting on $A$, we may form the {\emph skew
category algebra} $A \rtimes^{\sigma} G$, and also the standard crossed
product $C^*$-algebra appearing in e.g. \cite{takesaki03}. The relation
between these two algebras is that the skew category algebra sits as a
dense subalgebra inside the crossed product $C^*$-algebra.

\section{Simple Skew Category Algebras}\label{skewcategoryalgebras}

In this section, we first recall the definitions
of skew category systems $(A,G,\sigma)$
and skew category algebras $A \rtimes^{\sigma} G$ from \cite{pajo11}
(see Definition \ref{defskewcategorysystem}
and Definition \ref{defskewcategoryalgebra}).
Thereafter, we 
show two results concerning simplicity
of skew category algebras and properties
of skew category systems
(see Proposition \ref{simplicity} and
Proposition \ref{simplicitycommutativeA}).
These results will be applied to
category dynamical systems in Section \ref{catdynsyst}.

\begin{conventionsrings}
Let $R$ be an associative ring.
The identity map $R \rightarrow R$ is denoted by $\id_R$.
If $R$ is unital then the identity element of $R$
is nonzero and is denoted by $1_R$.
The category of unital rings is denoted by $\Ring$.
We say that a subset $R'$ of $R$ is a subring of $R$
if it is itself a ring under the binary operations of $R$.
We always assume that ring homomorphisms between
unital rings respect the identity elements.
If $A$ is a subset of $R$, then the {\it commutant} of $A$
in $R$ is the set of elements of $R$ that commute
with every element of $A$.
If $A$ is a commutative subring of $R$, then
$A$ is called {\it maximal commutative} in $R$
if the commutant of $A$ in $R$ equals $A$.
All ideals of rings are supposed to be two-sided.
By a \emph{nontrivial ideal} we mean a proper nonzero ideal.
If $R$ is commutative and $x \in R$, then $\Ann(x)$
denotes the ideal of $R$ consisting of all $y \in R$
satisfying $xy = 0$.
If $G$ is a monoid of endomorphisms of a ring $A$,
then we say that a subset $B$ of $A$ is 
\emph{$G$-invariant} if for every $g \in G$
the inclusion $g(B) \subseteq B$ holds.
The ring $A$ is called \emph{$G$-simple} if there 
is no nontrivial $G$-invariant ideal of $A$.
\end{conventionsrings}

\begin{conventionscategories}
Let $G$ be a category.
Recall that $G$ is called {\it connected}
if its underlying undirected graph is connected.
Note that if $G$ is a groupoid, then $G$ is 
connected precisely when there to each pair $e,f \in \ob(G)$,
is $n \in \mor(G)$ with $d(n)=e$ and $c(n)=f$.
We say that $G$ is {\it locally a group}
({\it locally abelian}) if
each monoid $G_e$, for $e \in \ob(G)$, is a group (abelian).
We say that $G$ is {\it inverse connected} if given
$e,f  \in \ob(G)$, there are $m,n \in \mor(G)$
with $d(m)=c(n)=f$ and $mn = e$.
Note that if $G$ is both inverse connected
and locally a group, then $G$ is a groupoid.
A congruence relation $R$ on $G$ is a collection of
equivalence relations $R_{e,f}$ on $\hom(e,f)$,
for $e,f \in \ob(G)$, chosen so that if 
$(m,m') \in R_{e,f}$ and $(n,n') \in R_{f,g}$, 
then $(mn,m'n') \in R_{e,g}$, for all $e,f,g \in \ob(G)$. 
If $e,f \in \ob(G)$ and $n \in \hom(e,f)$,
then we let $[n]$ denote the equivalence class
in $\hom(e,f)$ defined by $R_{e,f}$. 
Suppose that $H$ is another
category and that $F : G \rightarrow H$ is a functor. 
The kernel of $F$, denoted $\ker(F)$, is the congruence relation on $G$ defined by letting 
$(m,n) \in \ker(F)_{e,f}$, for $e,f \in \ob(G)$, 
whenever $m,n \in \hom(e,f)$ and $F(m) = F(n)$.
Recall that $\ker(F)$ is called {\it trivial}
if for each $e,f \in \ob(G)$,
$\ker(F)_{e,f}$ is the equality relation on $\hom(e,f)$.
We say that $\ker(F)$ is {\it locally trivial}
if for each $e \in \ob(G)$, 
$\ker(F)_{e,e}$ is the equality relation on $\hom(e,e)$.
\end{conventionscategories}

\begin{defn}\label{defskewcategorysystem}
By a \emph{skew category system} we mean a
triple $(A,G,\sigma)$ where $G$ is a (small) category,
$A$ is the direct sum of unital rings $A_e$, for $e \in \ob(G)$,
and $\sigma$ is a functor $G \rightarrow \Ring$ satisfying
$\sigma(n) : A_{d(n)} \rightarrow A_{c(n)}$, for $n \in G$.
\end{defn}

\begin{rem}\label{intermsofmaps}
Suppose that $(A,G,\sigma)$ is a skew category system.
The fact that $\sigma$ is a functor $G \rightarrow \Ring$
can be formulated in terms of maps by saying that
\begin{equation}\label{idd}
\sigma(e) = {\rm id}_{A_e}
\end{equation}
for all $e \in \ob(G)$, and
\begin{equation}\label{algebraa}
\sigma(m) \sigma(n) = \sigma(mn)
\end{equation}
for all $(m,n) \in G^{(2)}$.
\end{rem}

\begin{defn}\label{defskewcategoryalgebra}
If $(A,G,\sigma)$ is a skew category system, then
we let $A \rtimes^{\sigma} G$ denote the collection of formal sums
$\sum_{n \in G} a_n u_n$, where $a_n \in A_{c(n)}$, $n \in G$,
are chosen so that all but finitely many of them are nonzero.
Define addition and multiplication on $A \rtimes^{\sigma} G$ by
\begin{equation}\label{addition}
\left( \sum_{n \in G} a_n u_n \right) +
\left( \sum_{n \in G} b_n u_n \right) =
\sum_{n \in G} \left( a_n + b_n \right) u_n
\end{equation}
respectively
\begin{equation}\label{multiplication}
\left( \sum_{n \in G} a_n u_n \right)
\left( \sum_{n \in G} b_n u_n \right) =
\sum_{n \in G} \left( \sum_{\stackrel{(m,m') \in G^{(2)};}{mm' = n}}
a_m \sigma(m)(b_{m'}) \right) u_n
\end{equation}
for $\sum_{n \in G} a_n u_n$, $\sum_{n \in G} b_n u_n \in A \rtimes^{\sigma} G$.
It is clear that these operations define
a ring structure on $A \rtimes^{\sigma} G$.
We call $A \rtimes^{\sigma} G$ the \emph{skew category algebra}
defined by $(A,G,\sigma)$.
Often we let $u_n$ denote $1_{A_{c(n)}} u_n$
for all $n \in G$.
\end{defn}

\begin{rem}
If $G$ is a groupoid, then (\ref{multiplication})
can be rewritten in the following slightly simpler form
\begin{equation*}\label{groupoidproduct}
\left( \sum_{n \in G} a_n u_n \right)
\left( \sum_{n \in G} b_n u_n \right) =
\sum_{n \in G} \left( \sum_{\stackrel{m \in G;}{c(m)=c(n)}}
a_m \sigma(m)(b_{m^{-1}n}) \right) u_n,
\end{equation*}
which, in the case when $G$ is a group, 
simplifies even more to
\begin{equation*}\label{groupproduct}
\left( \sum_{n \in G} a_n u_n \right)
\left( \sum_{n \in G} b_n u_n \right) =
\sum_{n \in G} \left( \sum_{m \in G}
a_m \sigma(m)(b_{m^{-1}n}) \right) u_n.
\end{equation*}
\end{rem}

\begin{rem}
Suppose that $T := A \rtimes^{\sigma} G$ is a skew category algebra.
If we for each $n \in G$, put $T_n = A_{c(n)} u_n$,
then $T = \oplus_{n \in G} T_n$,
$T_m T_n = T_{mn}$, for $(m,n) \in G^{(2)}$,
and $T_m T_n = \{ 0 \}$, otherwise.
In the terminology of \cite{liu06}, \cite{lu06}, \cite{oinlun10}
and \cite{oinlunMiyashita}
this means that a skew category algebra is a strongly category
graded ring.
\end{rem}

\begin{prop}\label{intersection}
If $A \rtimes^{\sigma} G$ is a skew
category algebra and $I$ is an ideal of
$A \rtimes^{\sigma} G$, then
\begin{itemize}

\item[{\rm (a)}] the equality $I = A \rtimes^{\sigma} G$
holds if and only the equality $I \cap A_e = A_e$ holds
for all $e \in \ob(G)$;

\item[{\rm (b)}] if $G$ is inverse connected,
then the equality $I = A \rtimes^{\sigma} G$ holds
if and only if the equality $I \cap A_e = A_e$ holds
for some $e \in \ob(G)$.
\end{itemize}
\end{prop}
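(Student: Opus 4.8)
The plan is to handle (a) first and then reduce (b) to (a) using inverse connectedness. For (a), the forward direction is immediate: if $I = A \rtimes^{\sigma} G$, then $1_{A_e} = 1_{A_e} u_e \in I$ and $1_{A_e} \in A_e$, so $1_{A_e} \in I \cap A_e$, whence $I \cap A_e = A_e$ for every $e \in \ob(G)$. For the converse, suppose $I \cap A_e = A_e$ for all $e \in \ob(G)$; in particular $1_{A_e} \in I$ for each $e$. Given an arbitrary element $x = \sum_{n \in G} a_n u_n \in A \rtimes^{\sigma} G$, which by definition is a finite sum, for each $n$ in its (finite) support we have $a_n \in A_{c(n)}$ and hence $a_n u_n = a_n 1_{A_{c(n)}} u_n = a_n (1_{A_{c(n)}} u_n) \in I$ since $1_{A_{c(n)}} u_n = 1_{A_{c(n)}} \cdot u_n \in A_{c(n)} \cdot (A \rtimes^{\sigma} G) \subseteq I$ using $1_{A_{c(n)}} \in I$ and that $I$ is a right ideal. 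Summing the finitely many terms $a_n u_n$ shows $x \in I$, so $I = A \rtimes^{\sigma} G$.

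For (b), assume $G$ is inverse connected. One direction is a special case of (a): if $I = A \rtimes^{\sigma} G$ then $I \cap A_e = A_e$ for every, hence for some, $e \in \ob(G)$. For the converse, suppose $I \cap A_f = A_f$ for some fixed $f \in \ob(G)$; by part (a) it suffices to show $I \cap A_e = A_e$ for every $e \in \ob(G)$, i.e. that $1_{A_e} \in I$ for all $e$. Fix an arbitrary $e \in \ob(G)$. By inverse connectedness there exist $m, n \in \mor(G)$ with $d(m) = c(n) = f$ and $mn = e$; note then that $c(m) = c(mn) = c(e) = e$ and $d(n) = d(mn) = d(e) = e$. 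Now compute in $A \rtimes^{\sigma} G$: since $1_{A_f} \in I \cap A_f \subseteq I$, the element
\[
(1_{A_{c(m)}} u_m)(1_{A_f})(1_{A_{c(n)}} u_n)
\]
lies in $I$ because $I$ is a two-sided ideal. Using the multiplication rule \eqref{multiplication} together with $\sigma(m)(1_{A_f}) = 1_{A_{c(m)}}$ (ring homomorphisms respect identities) and $\sigma$ being a functor so that $u_m u_n = u_{mn} = u_e$, this product equals $1_{A_{c(m)}} u_m u_n = 1_{A_e} u_e = 1_{A_e}$. Hence $1_{A_e} \in I \cap A_e$, so $I \cap A_e = A_e$. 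Since $e$ was arbitrary, part (a) gives $I = A \rtimes^{\sigma} G$.

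The only mildly delicate point is bookkeeping of domains and codomains in (b): one must verify that $1_{A_f}$ can legitimately be multiplied on the left by $u_m$ and on the right by $u_n$ (which requires $d(m) = f$ and $c(n) = f$, exactly what inverse connectedness supplies) and that the resulting product lands in $A_e u_e$ with coefficient $1_{A_e}$, which uses $mn = e$ and the functoriality relations \eqref{idd}–\eqref{algebraa}. Everything else is a routine unwinding of Definition \ref{defskewcategoryalgebra} and the fact that elements of $A \rtimes^{\sigma} G$ are finite sums, together with the standard observation that containing all the local identities $1_{A_e}$ forces an ideal to be everything.
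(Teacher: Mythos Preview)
Your proof is correct and follows essentially the same route as the paper's: for (a) you use that each local identity $1_{A_e}u_e$ lies in $I$ to absorb any finite sum (the paper phrases this as $x\bigl(\sum_{e\in X}u_e\bigr)=x$ for a suitable finite $X\subseteq\ob(G)$, while you handle each term $a_nu_n$ separately, but these are the same idea); for (b) you perform exactly the paper's sandwich computation $u_m\,1_{A_f}\,u_n=1_{A_e}u_e$, only with the names of $e$ and $f$ interchanged.
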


\begin{proof}
Let $R$ denote $A \rtimes^{\sigma} G$.

(a) The ''only if'' statement is clear.
Now we show the ''if'' statement.
Suppose that for every $e \in \ob(G)$
the equality $I \cap A_e = A_e$ holds.
Take $x \in R$.
From the definition of skew category
algebras it follows that there is
a finite subset $X$ of $\ob(G)$
satisfying
\begin{equation}\label{identity}
x \left( \sum_{e \in X}u_e \right) = x.
\end{equation}
But $\sum_{e \in X} u_e \in \sum_{e \in X} A_e
= \sum_{e \in X} I \cap A_e \subseteq I$ which,
by equation (\ref{identity}),
implies that $x \in I$.
Since $x$ was arbitrarily chosen from $R$,
we get that $I = R$.

(b) The ''only if'' statement is clear.
Now we show the ''if'' statement.
Suppose that $I \cap A_e = A_e$ for some $e \in \ob(G)$.
Take $f \in \ob(G)$. By (a) we are done
if we can show that $I \cap A_f = A_f$.
Since $G$ is inverse connected, there are
$m,n \in G$ with $d(m)=c(n)=e$
such that $f = mn$.
But then $u_f = u_m u_e u_n \in
I \cap A_f$. This implies that $I \cap A_f = A_f$.
\end{proof}

\begin{prop}\label{estonia}
Let $A \rtimes^{\sigma} G$ be a skew
category algebra.
Suppose that $R$ is a congruence relation
contained in $\ker(\sigma)$. 
If I is the two-sided ideal in $A \rtimes^{\sigma} G$ 
generated by an element $\sum_{n \in \mor(G)} a_n u_n$, 
where $a_n \in A_{c(n)}$, for $n \in \mor(G)$,
with $a_n = 0$ for all but finitely many $n \in \mor(G)$,
satisfying $a_n = 0$ if $n$ does not belong to any of the 
classes $[e]$, for $e \in \ob(G)$, and
$\sum_{n \in [e]} a_n = 0$, for $e\in \ob(G)$, 
then $A \cap I = \{ 0 \}$.
In particular, if $A \rtimes^{\sigma} G$ is simple,
then $\ker(\sigma)$ is locally trivial.
\end{prop}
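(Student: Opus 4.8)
The plan is to show directly that the two-sided ideal $I$ generated by such an element $x = \sum_{n} a_n u_n$ meets $A$ only in $0$. The key structural observation is that the hypotheses on $x$ say precisely that, modulo the congruence $R \subseteq \ker(\sigma)$, the element $x$ is supported on "loop classes" $[e]$, $e \in \ob(G)$, and within each such class the coefficients sum to zero. Since $R \subseteq \ker(\sigma)$, any two morphisms $m, m'$ with $(m,m') \in R_{e,e}$ satisfy $\sigma(m) = \sigma(m')$, so from the multiplication rule the action of $u_m$ and $u_{m'}$ on elements of $A$ agree; this is what will let the "sum to zero" condition collapse things.

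First I would describe a general element of $I$: it is a finite sum of terms $p \, x \, q$ with $p, q \in A \rtimes^\sigma G$, and by bilinearity it suffices to analyze $a u_m \, x \, b u_{m'}$ for homogeneous $a u_m$, $b u_{m'}$. Next I would compute, for a fixed object $e$ and the piece $\sum_{n \in [e]} a_n u_n$ of $x$, what $a u_m \left( \sum_{n \in [e]} a_n u_n \right) b u_{m'}$ looks like: each term is $a \, \sigma(m)(a_n) \, \sigma(mn)(b) \, u_{mnm'}$, and composability forces $d(m) = c(n) = d(n) = c(m') = e$ (using $n \in G_e$), so $mnm'$ ranges over a single $R$-class in $\hom(d(m'), c(m))$ — here I use that $R$ is a congruence, so $(mnm', mn'm') \in R$ whenever $(n,n') \in R_{e,e}$, and in particular whenever $n, n'$ both lie in $[e]$. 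Since $\sigma(m)$ is a ring homomorphism and $\sigma(mn) = \sigma(m)\sigma(n)$ depends only on the $R$-class of $n$ (as $R \subseteq \ker(\sigma)$ and $n \mapsto \sigma(n)$ is constant on $[e]$, hence $\sigma(n) = \sigma(e) = \id$ for... — more carefully, $\sigma$ is constant on the class $[e]$, whatever that common value is), the coefficient of $u_{mnm'}$ factors through $\sum_{n \in [e]} \sigma(m)(a_n) = \sigma(m)\!\left( \sum_{n \in [e]} a_n \right) = \sigma(m)(0) = 0$. Summing over the class, the whole contribution of $x$'s $[e]$-part to $a u_m \, x \, b u_{m'}$ vanishes after grouping by the target morphism $[mnm']$; doing this for every $e$ shows $a u_m \, x \, b u_{m'} = 0$, hence $I = \{0\}$ — but that is too strong, so in fact the correct statement is that I must be more careful: $mnm'$ need not be the same morphism for different $n \in [e]$, only the same $R$-class, so the grouping is by $R$-class and the coefficients in each fixed degree $u_k$ need not individually cancel. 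The honest argument is: project onto the $A = \oplus_e A_e$ part, i.e. onto degrees $k$ with $k = f \in \ob(G)$. A term $u_{mnm'}$ lands in $A$ only if $mnm'$ is an identity; then $m, n, m'$ are all invertible on their components with $m' = (mn)^{-1}$ in the relevant hom-set structure, and again $mnm'$ being an identity for one $n \in [e]$ forces, via the congruence and $R \subseteq \ker \sigma$, the coefficient sum $\sum_{n' \in [e], \, mn'm' = mnm'} \sigma(m)(a_{n'})$ to contain $\sigma(m)(\sum_{n \in [e]} a_n) = 0$ as a summand only when the class $[e]$ maps entirely into that one identity degree, which is exactly the situation governed by local triviality.

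The main obstacle, and the step I would spend the most care on, is precisely the bookkeeping of which products $mnm'$ (with $n$ running over a loop class $[e]$) can be an identity morphism, and confirming that whenever one of them is, all of them are and they coincide — this is where the congruence property combined with $R \subseteq \ker(\sigma)$ does the real work, since it guarantees $\sigma(mnm')$ is independent of the representative $n \in [e]$ and equals a fixed homomorphism, so the coefficient of each identity-degree term that arises is $\sigma(m)\!\left(\sum_{n \in [e]} a_n\right)\sigma(\cdot)(b) = 0$. Once $A \cap I = \{0\}$ is established, the "in particular" follows immediately: if $A \rtimes^\sigma G$ is simple, then the only ideals are $\{0\}$ and the whole ring, and the whole ring meets $A$ in $A \neq \{0\}$ (as each $A_e$ is unital), so any such generator $x$ must already be $0$; taking $x = u_n - u_{n'}$ for $(n, n') \in \ker(\sigma)_{e,e}$ (which satisfies the hypotheses with $R = \ker(\sigma)$) forces $u_n = u_{n'}$, hence $n = n'$ by the directness of the grading $T = \oplus_{n \in G} A_{c(n)} u_n$ and the fact that $1_{A_e} \neq 0$. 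Therefore $\ker(\sigma)_{e,e}$ is the equality relation for every $e$, i.e. $\ker(\sigma)$ is locally trivial.
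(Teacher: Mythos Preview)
Your proposal contains a genuine gap. You correctly notice that for fixed $m, m'$ and $n$ ranging over a class $[e]$, the products $mnm'$ lie in a single $R$-class but need not coincide as morphisms. You then assert that ``whenever one of them is an identity, all of them are and they coincide'' --- but this is false. Take $G = \Z$ (viewed as a one-object category), $\sigma$ trivial, $R = \ker(\sigma)$ (so $[e] = \Z$), and $m = m' = e$: then $mnm' = n$, which is an identity only for $n = e$, not for every $n \in [e]$. The same example shows that an individual product $a u_m \, x \, b u_{m'}$ can have nonzero $A$-component (e.g.\ $u_e(u_e - u_1)u_e = u_e - u_1$ has $A$-part $1_A u_e \neq 0$), so the strategy of projecting each generator of $I$ onto $A$ separately cannot succeed. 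The vanishing of $A \cap I$ is a global cancellation across different $(m,m')$, not a termwise phenomenon.

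The paper does not prove this proposition directly but refers to \cite{oinlun08}; the argument you are missing is a quotient-map construction. Form the quotient category $\overline{G} = G/R$; since $R \subseteq \ker(\sigma)$, the functor $\sigma$ descends to $\overline{\sigma} : \overline{G} \to \Ring$, and $a u_n \mapsto a u_{[n]}$ extends to a ring homomorphism $\varphi : A \rtimes^{\sigma} G \to A \rtimes^{\overline{\sigma}} \overline{G}$. The hypotheses on $x$ say precisely that $\varphi(x) = \sum_{e \in \ob(G)} \bigl( \sum_{n \in [e]} a_n \bigr) u_{[e]} = 0$, whence $I \subseteq \ker(\varphi)$; and $\varphi$ restricts to an isomorphism on $A$ (identifying $A_e u_e$ with $A_e u_{[e]}$), so $A \cap I \subseteq A \cap \ker(\varphi) = \{0\}$. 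Your ``in particular'' paragraph is then essentially correct, except that you should take $x = u_e - u_n$ for a nonidentity $n \in G_e$ with $\sigma(n) = \sigma(e)$: this $x$ genuinely satisfies the support condition $n \in [e]$, whereas your choice $u_n - u_{n'}$ with arbitrary $(n,n') \in \ker(\sigma)_{e,e}$ need not.
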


\begin{proof}
See the proof of \cite[Proposition 9]{oinlun08}.
\end{proof}

\begin{prop}\label{simplicity}
Let $A \rtimes^{\sigma} G$ be a skew
category algebra.  
Consider the following five assertions:
\begin{enumerate}[{\rm (i)}]
    \item $A \rtimes^{\sigma} G$ is simple;
    \item $G$ is inverse connected;
  \item for each $e \in \ob(G)$, the ring $A_e$ is $G_e$-simple;  
	\item for each $e \in \ob(G)$, the ring
	$Z(A_e \rtimes^{\sigma} G_e)$ is a field;
	\item $\ker(\sigma)$ is locally trivial.
\end{enumerate}
The following conclusions hold:
\begin{enumerate}[{\rm (a)}]
\item {\rm (i)} implies {\rm (ii)}-{\rm (v)};

\item if $G$ is a locally abelian groupoid, then 
{\rm (i)} holds if and only if {\rm (ii)}-{\rm (v)} hold.

\end{enumerate}
\end{prop}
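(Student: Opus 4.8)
The plan is to prove the two conclusions in stages, handling the forward implications first and then assembling the converse under the local abelianness hypothesis.

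\medskip

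\noindent\textbf{Part (a): (i) $\Rightarrow$ (ii)--(v).} Assume $A \rtimes^{\sigma} G$ is simple. The implication (i) $\Rightarrow$ (v) is already recorded as the last sentence of Proposition \ref{estonia}, so nothing remains there. For (i) $\Rightarrow$ (ii) I would argue by contraposition: if $G$ is not inverse connected, then the objects of $G$ split into at least two classes under the equivalence ``$e \sim f$ iff there exist $m,n$ with $d(m)=c(n)=f$ and $mn=e$'' (reflexivity and transitivity are routine to check, symmetry one should verify carefully), and collecting the homogeneous components $T_n$ over morphisms whose source and target objects both lie in one such class yields a proper nonzero two-sided ideal — the grading is strong and $T_mT_n=0$ whenever the pair is not composable, which keeps the two blocks from interacting. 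Alternatively, one picks $e$ with $I\cap A_e=A_e$ failing for the ideal generated by $A_e$ when $e$ is ``isolated''. For (i) $\Rightarrow$ (iii): if $A_e$ has a nontrivial $G_e$-invariant ideal $J$, then $J u_e$ generates (inside the subalgebra $A_e\rtimes^\sigma G_e$, hence after suitable multiplications by $u_m$, $u_n$ inside the whole algebra) a proper nonzero ideal of $A\rtimes^\sigma G$ — here I would use Proposition \ref{intersection}(b) to reduce the question of properness to checking $I\cap A_e\neq A_e$, which holds because $J\neq A_e$ and the $G_e$-invariance prevents the ideal from swallowing $1_{A_e}$. Finally (i) $\Rightarrow$ (iv): $Z(A_e\rtimes^\sigma G_e)$ is certainly a commutative ring; simplicity of $A\rtimes^\sigma G$ should force $A_e\rtimes^\sigma G_e$ to have no ideals meeting its center nontrivially outside $\{0\}$ and $Z$, and a standard argument (an element $z$ of the center with $z\neq 0$ generates an ideal; its intersection with the center is an ideal of the commutative ring $Z(A_e\rtimes^\sigma G_e)$ containing $z$, and simplicity pushes it up to the whole center, i.e. $z$ is invertible in $Z$) shows every nonzero central element is invertible.

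\medskip

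\noindent\textbf{Part (b): the converse, assuming $G$ is a locally abelian groupoid.} Assume (ii)--(v). Since $G$ is inverse connected and locally a group, the note in the Conventions on categories tells us $G$ is already a groupoid, so the groupoid product formula applies. By Proposition \ref{intersection}(b), an ideal $I$ of $A\rtimes^\sigma G$ is everything as soon as $I\cap A_e=A_e$ for a single $e$; so it suffices to fix any $e\in\ob(G)$ and show that a nonzero ideal $I$ satisfies $I\cap A_e\neq\{0\}$ and then that this intersection is all of $A_e$. For the second step, note $I\cap A_e$ is a $G_e$-invariant ideal of $A_e$ (invariance because $u_n I u_{n^{-1}}\subseteq I$ for $n\in G_e$, and $u_n a u_{n^{-1}} = \sigma(n)(a)$ on $A_e$), so $G_e$-simplicity (iii) gives $I\cap A_e=A_e$ once it is nonzero. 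The crux is therefore: a nonzero ideal $I$ has nonzero intersection with some $A_e$. Here I would take a nonzero element $x=\sum a_n u_n\in I$ of minimal support and use a cleaning/shortening argument in the spirit of \cite{oinlun08}: multiply on left and right by elements $u_m,u_{n^{-1}}$ and by functions/idempotents from $A$ to reduce the support so that it is concentrated over morphisms in a single group $G_f$ (using that $G$ is a groupoid to ``transport'' support along morphisms), reducing to the case $x\in A_f\rtimes^\sigma G_f$ with $x\neq 0$. Now inside $A_f\rtimes^\sigma G_f$ with $G_f$ abelian, I would invoke (iii) and (v) together with (iv): local triviality of $\ker(\sigma)$ means $\sigma|_{G_f}$ is injective, so the abelian group $G_f$ acts faithfully; the standard maximal-commutativity phenomenon for skew group algebras over a faithful abelian action (combined with $A_f$ being $G_f$-simple) forces $A_f$ to be maximal commutative in $A_f\rtimes^\sigma G_f$ and $A_f\rtimes^\sigma G_f$ to be simple, whence the ideal generated by $x$ in $A_f\rtimes^\sigma G_f$ meets $A_f$ nontrivially; transporting back gives $I\cap A_e\neq\{0\}$. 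Assembling: $I=A\rtimes^\sigma G$, so the algebra is simple.

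\medskip

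\noindent\textbf{Where the difficulty lies.} The routine parts are (a) and the reductions via Proposition \ref{intersection}; the genuine obstacle is the support-shortening argument producing a nonzero element of some $A_f\rtimes^\sigma G_f$ inside an arbitrary nonzero ideal, and then the fact — essentially a skew-group-algebra statement — that a faithful action of an abelian group $G_f$ on a $G_f$-simple ring $A_f$ yields a simple skew group algebra with $A_f$ maximal commutative. I would either cite the relevant lemma from \cite{oin06}/\cite{oin07} (or re-derive it: for $0\neq y=\sum_{g\in G_f} b_g u_g$ in an ideal of $A_f\rtimes^\sigma G_f$ with minimal support, maximal commutativity of $A_f$ lets one find $a\in A_f$ with $ay-ya$ of strictly smaller support, forcing the support of $y$ to be a single element, hence $b_g u_g$ with $b_g$ a unit up to a $G_f$-invariant ideal, and $G_f$-simplicity finishes), and the care needed is to make these manipulations respect the category grading when passing between $A_f\rtimes^\sigma G_f$ and the ambient $A\rtimes^\sigma G$ — in particular checking that conjugation by $u_m$ for $m\notin G_f$ carries $A_f\rtimes^\sigma G_f$ isomorphically onto $A_{c(m)}\rtimes^\sigma G_{c(m)}$, which is exactly what inverse connectedness and the groupoid structure provide.
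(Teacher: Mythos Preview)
Your Part~(a) sketch is essentially the paper's argument, modulo cosmetic differences (you argue~(ii) by contraposition via a block decomposition, the paper argues it directly by generating an ideal from $u_e$; both are fine). Your treatment of~(iv) is a bit telegraphic --- ``simplicity pushes it up to the whole center'' needs the observation that $u_e R u_e = A_e\rtimes^\sigma G_e$, so that $u_e$ can be written as $wz$ with $w\in A_e\rtimes^\sigma G_e$, and then a short calculation shows $w$ is central --- but the idea is there.

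Part~(b), however, has a real gap. Your plan is to show that $A_f\rtimes^\sigma G_f$ is itself simple by invoking ``maximal commutativity of $A_f$'' together with (iii) and~(v). Two problems: first, $A_f$ is not assumed commutative in this proposition (commutativity enters only in Proposition~\ref{simplicitycommutativeA}), so speaking of $A_f$ being maximal commutative is illegitimate, and your parenthetical shortening argument $ay-ya$ breaks down when the identity coefficient $b_e$ is not central in $A_f$. Second, and more decisively, (iii)~$+$~(v)~$+$~abelianness do \emph{not} imply simplicity of $A_f\rtimes^\sigma G_f$: take $A_f=M_n(\mathbb{C})$ with $G_f=\mathbb{Z}/k\mathbb{Z}$ acting by an inner automorphism of exact order~$k$; then $\sigma$ is injective and $A_f$ is $G_f$-simple, yet $A_f\rtimes^\sigma G_f\cong M_n(\mathbb{C})^k$ is not simple. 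In this example~(iv) fails, which is the point: hypothesis~(iv) is not decorative, and your outline never actually deploys it.

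The paper's route through this difficulty is worth internalizing. One first uses $G_e$-simplicity~(iii) to show that the set of possible ``identity coefficients'' of elements of $u_eIu_e$ with support contained in a fixed support is a $G_e$-invariant ideal of $A_e$, hence all of $A_e$; this lets one normalize a minimal-support element $z=\sum_n c_n u_n\in u_eIu_e$ to have $c_e=1_{A_e}$. With $c_e=1$ the commutator $au_mz-zau_m$ has vanishing $u_m$-coefficient \emph{regardless of whether $A_e$ is commutative}, because $a\cdot 1-1\cdot a=0$; abelianness of $G_e$ then gives $|\Supp(au_mz-zau_m)|<|\Supp(z)|$, forcing $z\in Z(A_e\rtimes^\sigma G_e)$. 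Now~(iv) applies directly: $z$ is a nonzero element of a field, hence a unit, so $u_e\in I$ and Proposition~\ref{intersection}(b) finishes. The key move you are missing is the normalization $c_e=1$, which replaces the unavailable commutativity of $A_e$ by centrality of the single coefficient that matters.
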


\begin{proof}
Let $R$ denote $A \rtimes^{\sigma} G$.

(a) Suppose that (i) holds.
We first show (ii).
Take $e,f \in \ob(G)$. Let $I$ denote the ideal 
generated by $u_e$ in $R$.
Since $R$ is simple it follows that $I = R$.
In particular, we get that $u_f \in I$.
Since $I$ consists of the set of finite sums
of the form $x u_e y$, where $x,y \in R$,
it follows that there exist $m,n \in \mor(G)$
with $d(m)=c(n)=e$ such that $u_f = u_m u_e u_n$.
This implies that $f = mn$ and hence that $G$ 
is inverse connected.
Now we show (iii). Take $e \in \ob(G)$
and a nonzero $G_e$-invariant ideal $J_e$ of $A_e$.
Let $I$ denote the ideal of $R$ generated by
$J_e u_e$. Since $R$ is simple we get that $I = R$.
This implies in particular that
$u_e \in I$. 
Then $u_e \in I \cap A_e u_e = J_e u_e$
which implies that 
$1_{A_e} \in J_e$. Hence $J_e = A_e$.
Now we show (iv).
Let $e\in \ob(G)$.
Take a nonzero $x$
in $Z(A_e \rtimes^{\sigma} G_e)$ 
and let $I$ be the ideal of $R$ generated by $x$.
Since $I$ is nonzero and $R$ is simple, we get that $I=R$.
In particular, $u_e$ equals a finite
sum of elements of the form $yxz$
where $y,x \in A_e \rtimes^{\sigma} G_e$.
But since $x$ belongs to $Z(A_e \rtimes^{\sigma} G_e)$
we get that $u_e = w x = xw$ for some 
$w \in A_e \rtimes^{\sigma} G_e$. 
All that is left to show now is that
$w \in Z(A_e \rtimes^{\sigma} G_e)$.
Take $v \in A_e \rtimes^{\sigma} G_e$.
Then, since $x$ commutes with $v$, we get that
$w v = w v u_e = w v x w = w x v w = u_e v w = v w$.
Assertion (v) follows immediately from Proposition \ref{estonia}.

(b) From (a) it follows that we only need to
show the ''if'' statement.
Suppose that (i)-(v) hold.
Let $I$ be a nonzero ideal of $R$.
Then there is a nonzero element 
$x = \sum_{n \in \mor(G)} a_n u_n$ in $I$,
where $a_n \in A_{c(n)}$, for $n \in \mor(G)$ 
and $a_n = 0$ for all but finitely many $n \in \mor(G)$,
with the property that $a_e \neq 0$
for some $e \in \ob(G)$.
Indeed, take a nonzero
$y = \sum_{m \in \mor(G)} b_m u_m$ in $I$.
We now consider two cases.
Case 1: There is $n \in \mor(G)$
with $d(n)=c(n)$ and $b_n \neq 0$.
Then $x = yu_{n^{-1}}$ has the desired property 
where $e = c(n)$. 
Case 2: There is $n \in \mor(G)$ with 
$d(n) \neq c(n)$ and $b_n \neq 0$.
Since $G$ is inverse connected it
follows that there are $m,p \in \mor(G)$
with $d(m) = c(n)$, $c(p)=d(p)$ and $mp=c(n)$.
Then $x = yu_{(nm)^{-1}}$ has the desired property
where $e = c(n)$.
Let $J$ be the ideal of $A_e \rtimes^{\sigma} G_e$
consisting of all $b \in A_e$ such that there
are $b_n \in A_e$, for $n \in G_e \setminus \{ e \}$,
with the property that $b_n \in A_e$, for 
$n \in G_e \setminus \{ e \}$, where
$b_n = 0$ for all but finitely many
$n \in G_e \setminus \{ e \}$, and
$y := b + \sum_{n \in G_e \setminus \{ e \}} b_n u_n \in u_e I u_e$
and $\Supp(y) \subseteq \Supp(x)$.
By the above discussion concerning the element
$x$ it follows that $J$ is nonzero.
Now we show that $J$ is $G_e$-invariant.
Take $m \in G_e$. Then, since $G_e$ is abelian,
it follows that 
$$I \ni u_m y u_{m^{-1}} =
u_m b u_{m^{-1}} + 
\sum_{m \in G_e \setminus \{ e \}} u_m b u_n u_{m^{-1}} =
\sigma(m)(b) + \sum_{n \in G_e \setminus \{ e \}} \sigma(m)(b_n) u_n.$$
Therefore, $\sigma(m)(b) \in J$. 
By $G_e$-simplicity of $A_e$ it follows that $J = A_e$.
In particular, we can choose $y$ so that $b = 1_{A_e}$.
Among all nonzero elements 
$z = \sum_{n \in G_e} c_n u_n \in u_e I u_e$,
with $c_n = 0$ for all but finitely many $n \in G_e$,
choose an element minimizing $|\Supp(z)|$. 
By the above discussion, we can assume that $c_e=1$
for such an element $z$. 
Now we show that $z \in Z(A_e \rtimes^{\sigma} G_e)$.
Take $a \in A_e$ and $m \in G_e$.
Then, since $G_e$ is abelian, it follows that
$$a u_m z - z a u_m = 
\sum_{n \in G_e} \left( a \sigma(m)(c_n) - c_n \sigma(n)(a) \right) u_{mn}.$$
Since $$a \sigma(m) (c_e) - c_e \sigma(e)(a) =
a \sigma(m) (1_{A_e}) - 1_{A_e} a = 
a 1_{A_e} - 1_{A_e} a = a - a = 0$$
we get that $|\Supp(a u_m z - z a u_m)| < |\Supp(z)|$.
Since $a u_m z - z a u_m \in u_e I u_e$, we get,
by minimality of $|\Supp(z)|$, that 
$a u_m z - z a u_m = 0$.
Therefore $z \in Z(A_e \rtimes^{\sigma} G_e)$.
Since $Z(A_e \rtimes^{\sigma} G_e)$ is a field
and $z$ is nonzero, it follows that 
$z$ is invertible in $A_e \rtimes^{\sigma} G_e$
and hence that $u_e I u_e = A_e \rtimes^{\sigma} G_e$.
In particular $I \cap A_e = A_e$.
Simplicity of $A \rtimes^{\sigma} G$
now follows directly from  Proposition \ref{intersection}(b).
\end{proof}

\begin{prop}\label{eqconditions}
Let $A \rtimes^{\sigma} G$ be a skew
category algebra with $A$ commutative. Consider the following two assertions:
\begin{enumerate}[{\rm (i)}]
\item if $I$ is a nonzero ideal of $A \rtimes^{\sigma} G$,
then $I \cap A \neq \{ 0 \}$;

\item the subring $A$ is maximal commutative in $A \rtimes^{\sigma} G$.
\end{enumerate}
The following conclusions hold:
\begin{enumerate}[{\rm (a)}]
\item {\rm (i)} implies {\rm (ii)};

\item {\rm (ii)} does not imply {\rm (i)} for all categories $G$;

\item if $G$ is a groupoid,
then {\rm(i)} holds if and only if {\rm (ii)} holds.
\end{enumerate}
\end{prop}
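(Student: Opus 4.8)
The plan is to handle the three parts of Proposition~\ref{eqconditions} in the order (a), (c), (b), since (a) is the general implication, (c) refines it to an equivalence for groupoids, and (b) is a counterexample showing the converse fails in general.

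For part (a), I would argue the contrapositive: assume $A$ is not maximal commutative in $R := A \rtimes^{\sigma} G$ and produce a nonzero ideal $I$ with $I \cap A = \{0\}$. If $A$ fails to be maximal commutative, there is an element $x = \sum_{n \in G} a_n u_n \in R$ commuting with every element of $A$ but with $x \notin A$, so some $a_n \neq 0$ for an $n \notin \ob(G)$. The commutation relation $x a = a x$ for all $a \in A$ forces, comparing the $u_n$-components, that $a_n \, \sigma(n)(a) = a \, a_n$ in $A_{c(n)}$ for all $a \in A$ (here one must be careful that $a$ lives in the appropriate summand $A_{d(n)}$, so effectively $a_n (\sigma(n)(a) - \iota(a)) = 0$ where $\iota$ places $a \in A_{c(n)}$ — this is where the index-bookkeeping of the category grading enters). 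The nonzero homogeneous component $a_n u_n - (\text{something in }A)$ is then a candidate to generate an ideal meeting $A$ trivially; concretely, one shows the two-sided ideal generated by $x$ (or by a suitable homogeneous piece of it supported away from $\ob(G)$) has zero intersection with $A$, using that left and right multiplications by homogeneous elements $a_m u_m$ shuffle supports within cosets and that the ``field-like'' obstruction cannot collapse the support onto $\ob(G)$. I expect this step — extracting from a non-central commuting element a concrete ideal disjoint from $A$ — to be the main obstacle, and the cleanest route is probably to cite or adapt the standard skew-group-algebra argument (as in \"Oinert's work and \cite{svesildejeu08}) that the ideal generated by $x - $ (its $A$-part) avoids $A$.

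For part (c), assuming $G$ is a groupoid I must show (ii) $\Rightarrow$ (i), i.e. if $A$ is maximal commutative then every nonzero ideal $I$ meets $A$ nontrivially. Take a nonzero $y = \sum_n b_n u_n \in I$ and, exactly as in the proof of Proposition~\ref{simplicity}(b), use that $G$ is a groupoid to multiply $y$ on the right by some $u_{p}$ with $p$ an appropriate inverse so that the new element $x = y u_p \in I$ has a nonzero component $a_e$ at some identity $e \in \ob(G)$; concretely if $b_n \neq 0$ then right-multiplication by $u_{n^{-1}}$ moves mass onto $c(n) \in \ob(G)$. Then, among all nonzero elements of $u_e I u_e$ of the form $\sum_{n \in G_e} c_n u_n$, choose one $z$ with $|\Supp(z)|$ minimal and $c_e \neq 0$. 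The maximal-commutativity hypothesis is then used to show $z$ has support exactly $\{e\}$: for $a \in A_e$ and $m \in G_e$ the element $a u_m z - z a u_m \in u_e I u_e$ has strictly smaller support (the $e$-type term cancels as in Proposition~\ref{simplicity}(b)), hence vanishes by minimality, so $z$ commutes with all of $A_e \rtimes^\sigma G_e$ and, localizing the argument, $z$ lies in the commutant of $A_e$ inside the larger algebra; maximal commutativity of $A$ forces $z \in A_e$, so $0 \neq z \in I \cap A$. This is essentially a stripped-down version of the Proposition~\ref{simplicity}(b) argument without the hypotheses (iii)--(v), so it should go through smoothly once the groupoid reduction to an identity-supported element is in place.

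For part (b), I would exhibit a single explicit category $G$ and skew category system where $A$ is maximal commutative but some nonzero ideal misses $A$. The natural choice is a small category that is \emph{not} inverse connected — for instance the category with two objects $e, f$ and a single nonidentity morphism $\alpha : e \to f$ (plus the two identities), so that there is no morphism $f \to e$. Take $A_e = A_f = \C$ with $\sigma(\alpha) = \id_\C$; then $R = A \rtimes^\sigma G$ has $\C$-basis $\{u_e, u_f, u_\alpha\}$ with $u_\alpha u_e = u_\alpha$, $u_f u_\alpha = u_\alpha$, and all other products of distinct basis vectors zero. One checks $A = \C u_e \oplus \C u_f$ is maximal commutative (the only elements commuting with both $u_e$ and $u_f$ are the diagonal ones, since $u_\alpha u_e = u_\alpha \neq 0 = u_e u_\alpha$). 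But $\C u_\alpha$ is a nonzero two-sided ideal: $u_e u_\alpha = 0$, $u_\alpha u_e = u_\alpha$, $u_f u_\alpha = u_\alpha$, $u_\alpha u_f = 0$, $u_\alpha u_\alpha = 0$, so multiplying $u_\alpha$ on either side by any basis element lands in $\C u_\alpha$, and clearly $\C u_\alpha \cap A = \{0\}$. This refutes (i) while (ii) holds, proving (b). I would present (b) as a short explicit \textbf{Example} embedded in the proof, verify the two bullet claims by the direct multiplication-table computation just sketched, and remark that the failure is exactly the absence of inverse connectedness — which is why the groupoid hypothesis in (c) repairs it.
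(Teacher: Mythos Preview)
The paper does not actually prove this proposition: its ``proof'' is the single sentence ``See the proof of \cite[Proposition 2.3]{pajo11}.'' So there is no in-paper argument to compare against, and your proposal must be judged on its own merits.

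Your counterexample for (b) is correct and pleasantly explicit; the multiplication table checks out and $\C u_\alpha$ is indeed a nonzero two-sided ideal disjoint from $A$.

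Your sketch for (c) has the right skeleton but two imprecisions. First, the commutator $a u_m z - z a u_m$ for general $m \in G_e$ only has the ``$e$-term cancels'' behaviour when $G_e$ is abelian (that is exactly why Proposition~\ref{simplicity}(b) assumes local abelianness); for a general groupoid the supports $m\cdot\Supp(z)$ and $\Supp(z)\cdot m$ need not match. Fortunately you only need $m=e$: the element $az - za$ has support contained in $\Supp(z)\setminus\{e\}$, minimality forces it to vanish, hence $z$ commutes with all of $A$, and maximal commutativity gives $z \in A$. Second, you should minimise $|\Supp(z)|$ over \emph{all} nonzero elements of $u_e I u_e$ and then, if $e \notin \Supp(z)$, right-multiply by $u_{n_0^{-1}}$ for some $n_0 \in \Supp(z)$ to slide the support so that $e$ is hit without changing its cardinality.

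Your proposal for (a) has a genuine gap. The ideal generated by $x$, or by $x - (\text{its }A\text{-part})$, or by a single homogeneous piece $a_n u_n$, can easily be the whole ring: take $G = \Z$ acting trivially on $A_e = \C[t]$, so $R = \C[t][u,u^{-1}]$ is commutative, $A$ is not maximal commutative, and the commuting element $x = u_1 = u$ is a unit, whence $RxR = R$. So none of your suggested constructions produces an ideal missing $A$. The correct construction is more delicate: from a nonzero $a_{n_0} u_{n_0}$ (with $n_0 \in G_e \setminus\{e\}$) in the commutant of $A$ one uses the relation $a_{n_0}\bigl(\sigma(n_0)(a)-a\bigr)=0$ for all $a \in A_e$ to show that the ideal generated by $a_{n_0}u_e - a_{n_0}u_{n_0}$ has zero intersection with $A$. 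The point is that for homogeneous $r = b_m u_m$ and $s = c_p u_p$ the product $r(a_{n_0}u_e - a_{n_0}u_{n_0})s$ collapses, via the commutation relation, to a scalar times $(u_{mp} - u_{mn_0 p})$, i.e.\ a difference of two basis elements with the \emph{same} coefficient; summing such terms can never produce a nonzero element supported on $\ob(G)$. This is the argument carried out in \cite{pajo11}, and it is not the ``ideal generated by $x$'' heuristic you describe.
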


\begin{proof}
See the proof of \cite[Proposition 2.3]{pajo11}.
\end{proof}

\begin{rem}
For other results related to the implication
(i) implies (ii) in Proposition \ref{eqconditions},
see \cite{oinlun08}.
The implication (ii) implies (i) in Proposition \ref{eqconditions}
actually holds for all nondegenerate groupoid graded rings,
see \cite{oinlun10}.
\end{rem}

\begin{prop}\label{simplicitycommutativeA}
Let $A \rtimes^{\sigma} G$ be a skew
category algebra with $A$ commutative. 
Consider the following six assertions:
\begin{enumerate}[{\rm (i)}]
    \item $A \rtimes^{\sigma} G$ is simple;
    \item $G$ is inverse connected;
  \item for each $e \in \ob(G)$, the ring $A_e$ is $G_e$-simple;  
	\item for each $e \in \ob(G)$, the ring
	$Z(A_e \rtimes^{\sigma} G_e)$ is a field;
	\item $\ker(\sigma)$ is locally trivial.
	\item $A$ is maximal commutative in $A \rtimes^{\sigma} G$.
\end{enumerate}
The following conclusions hold:
\begin{enumerate}[{\rm (a)}]
\item {\rm (i)} implies {\rm (ii)}-{\rm (vi)};

\item if $G$ is a groupoid, 
then {\rm (i)} holds if and only if {\rm (ii)}-{\rm (vi)} hold.
\end{enumerate}
\end{prop}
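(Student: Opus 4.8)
The plan is to assemble the statement from Propositions~\ref{intersection}, \ref{simplicity} and~\ref{eqconditions}, since essentially all of the real work has already been done there. For part~(a), the implications (i)~$\Rightarrow$~(ii)--(v) are immediate from Proposition~\ref{simplicity}(a), which imposes no commutativity hypothesis on $A$. For the remaining implication (i)~$\Rightarrow$~(vi), I would note that if $A \rtimes^{\sigma} G$ is simple then its only nonzero ideal is itself, so trivially every nonzero ideal meets $A$ nontrivially; that is, assertion~(i) of Proposition~\ref{eqconditions} holds, and Proposition~\ref{eqconditions}(a) then yields that $A$ is maximal commutative in $A \rtimes^{\sigma} G$, which is~(vi).

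For part~(b), thanks to~(a) it is enough to show that (ii)--(vi) together imply~(i); in fact only (ii), (iii) and~(vi) will be used. So assume that $G$ is a groupoid and that (ii), (iii) and~(vi) hold, put $R := A \rtimes^{\sigma} G$, and let $I$ be a nonzero ideal of $R$. Since $G$ is a groupoid and $A$ is maximal commutative by~(vi), Proposition~\ref{eqconditions}(c) yields $I \cap A \neq \{0\}$. Picking a nonzero $a \in I \cap A$ and writing $a = \sum_{f \in X} a_f u_f$ with $X \subseteq \ob(G)$ finite and $a_f \in A_f$, we choose $e \in X$ with $a_e \neq 0$ and observe that $a u_e = a_e u_e$ is a nonzero element of $I \cap A_e$. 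Now $J_e := I \cap A_e$ is a nonzero $G_e$-invariant ideal of $A_e$: it is an ideal of $A_e$ because $I$ is two-sided and $A_e$ is multiplicatively closed, and it is $G_e$-invariant because for $m \in G_e$ and $x \in J_e$ one has $\sigma(m)(x) = u_m x u_{m^{-1}} \in I$ while $\sigma(m)$ maps $A_e$ into $A_e$. By the $G_e$-simplicity of $A_e$ from~(iii) we get $J_e = A_e$, and since $G$ is inverse connected by~(ii), Proposition~\ref{intersection}(b) then forces $I = R$. As $I$ was an arbitrary nonzero ideal, $R$ is simple, establishing~(i).

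I do not expect a genuine obstacle here; the only point that needs a moment of care is the verification that $I \cap A_e$ is $G_e$-invariant, which rests on the commutation relation $u_m x u_{m^{-1}} = \sigma(m)(x)$ valid in any skew category algebra. The structural point worth emphasizing is that, in contrast with Proposition~\ref{simplicity}(b), here $G$ is not assumed locally abelian, so the minimal-support argument used in that proof is unavailable; maximal commutativity of $A$---this is assertion~(vi), and it is precisely what the extra commutativity hypothesis on $A$ buys us---is used solely to secure $I \cap A \neq \{0\}$, after which local $G_e$-simplicity together with inverse connectedness complete the argument.
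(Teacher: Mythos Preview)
Your proof is correct and follows essentially the same route as the paper's own argument: part~(a) is assembled from Proposition~\ref{simplicity}(a) and Proposition~\ref{eqconditions}(a), and part~(b) proceeds by using maximal commutativity via Proposition~\ref{eqconditions}(c) to get $I\cap A\neq\{0\}$, isolating a nonzero $I\cap A_e$, checking $G_e$-invariance by conjugation with $u_m$, applying $G_e$-simplicity, and finishing with Proposition~\ref{intersection}(b). Your observation that only (ii), (iii) and (vi) are actually needed for the ``if'' direction in~(b) is a nice sharpening, and your conjugation argument $\sigma(m)(x)=u_m x u_{m^{-1}}$ is a slightly cleaner way of verifying $G_e$-invariance than the chain of inclusions the paper writes out.
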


\begin{proof}
Let $R$ denote $A \rtimes^{\sigma} G$.

(a) Suppose that (i) holds.
By Proposition \ref{simplicity}(a) we get that 
(ii)-(v) hold. It follows from Proposition \ref{eqconditions}(a)
that (vi) holds.

(b) Suppose that $G$ is a groupoid.
By (a) we only need to show the ''if'' statement.  
Suppose that (ii)-(vi) hold.
We show (i). Let $I$ be a nonzero ideal of $R$.
By Proposition \ref{eqconditions}(b),
we get that $I \cap A$ is a nonzero ideal of $A$.
Since $I \cap A = \sum_{e \in \ob(G)} I \cap A_e$,
there is $e \in \ob(G)$ such that
$I \cap A_e$ is a nonzero ideal of $A_e$.
Take $n \in G_e$. From the fact that 
$G$ is a groupoid, we get that
there is $m \in G_e$ such that $nm = e$.
Then $$ n(I \cap A_e) u_e = 
\sigma(n)(I \cap A_e) u_{nm} =
\sigma(n)(I \cap A_e) u_n u_m =
u_n (I \cap A_e) u_m \subseteq $$
$$ \subseteq (u_n I u_m) \cap u_n A_e u_m \subseteq
I \cap \sigma(n)(A_e) u_n u_m \subseteq
(I \cap A_e)u_{nm} =
(I \cap A_e)u_e.$$
Hence $u_n(I \cap A_e) \subseteq I \cap A_e$ and
thus $I \cap A_e$ is also $G_e$-invariant.
By $G_e$-simplicity of
$A_e$ this implies that $I \cap A_e = A_e$.
By Proposition \ref{intersection}(b), we get that $I=R$.
\end{proof}

\section{Minimal and Faithful Category Dynamical Systems}\label{catdynsyst}

In this section, we prove Theorem \ref{genmaintheorem}.
To this end, we need a result from \cite{pajo11} 
(see Proposition \ref{equivalentpajo11})
concerning topological freeness and maximal commutativity.
We shall also need three results (see Proposition \ref{minimalimpliesweaklytopfree},
Proposition \ref{equivalentminimal1} and
Proposition \ref{equivalentminimal2})
relating minimality of category dynamical systems
to simplicity of the corresponding
skew category algebras.
In the end of this section, we discuss the implications
of these results for the connection between
dynamical systems defined by partially defined functions
(see Definition \ref{partialdefn}) and properties
of the corresponding skew category algebras
(see Examples \ref{partialexmp} and \ref{examplediscrete}).

Let $s : G \rightarrow \Top^{\op}$ be a category dynamical system.
Then $(\oplus_{e \in \ob(G)}C(e) , G , \sigma)$
is a skew category system.
Indeed, we need to check conditions (\ref{idd})
and (\ref{algebraa}) from Remark \ref{intermsofmaps}.
Take $e \in \ob(G)$ and $f \in C(e)$.
Then $\sigma(e)(f) = f \circ s(e) = f.$
Therefore $\sigma(e) = \id_{C(e)}$.
Take $(m,n) \in G^{(2)}$ and $f \in C(d(n))$.
Then $\sigma(m)\sigma(n)(f) = \sigma(m)( f \circ s(n) )=
f \circ s(n) \circ s(m) = f \circ (s(m) \circ_{\op} s(n))=
f \circ s(mn) = \sigma(mn)(f).$
Therefore $\sigma(m) \sigma(n) = \sigma(mn)$.
Hence, we may form the skew category algebra
$(\oplus_{e \in \ob(G)} C(e)) \rtimes^\sigma G$.

\begin{prop}\label{equivalentpajo11}
Suppose that $s : G \rightarrow \Top^{\op}$
is a groupoid dynamical system.
If for each $e \in \ob(G)$ the 
space
$s(e)$ is locally compact Hausdorff,
then the following two assertions are equivalent:
\begin{enumerate}[{\rm (i)}]

\item $s$ is topologically free;

\item the subring $\oplus_{e \in \ob(G)} C(e)$ 
is maximal commutative in
$[\oplus_{e \in \ob(G)} C(e)] \rtimes^{\sigma} G$.

\end{enumerate}
\end{prop}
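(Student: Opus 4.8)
Write $A = \bigoplus_{e \in \ob(G)} C(e)$ and $R = A \rtimes^{\sigma} G$, and identify $A$ with the commutative subring $\bigoplus_{e \in \ob(G)} C(e) u_e$ of $R$ (it is commutative since each $C(e)$ is). The whole statement reduces to computing the commutant $C_R(A)$ of $A$ in $R$, and the plan is to show $C_R(A) = A$ if and only if $s$ is topologically free. First I would establish the following description: an element $\sum_{n \in \mor(G)} a_n u_n$ of $R$, with $a_n \in C(c(n))$, lies in $C_R(A)$ if and only if $a_n = 0$ whenever $d(n) \neq c(n)$, and, for every $e \in \ob(G)$ and every $n \in G_e$, one has $a_n(x) = 0$ for all $x \in s(e)$ with $s(n)(x) \neq x$. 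To prove this, it suffices to require commutation with all elements $f u_e$, $e \in \ob(G)$, $f \in C(e)$; expanding $(f u_e)\bigl(\sum_n a_n u_n\bigr)$ and $\bigl(\sum_n a_n u_n\bigr)(f u_e)$ via the multiplication formula (\ref{multiplication}), comparing coefficients of each $u_n$, and using $\sigma(n)(f) = f \circ s(n)$, gives for $n$ with $d(n)\neq c(n)$ the relation $f a_n = 0$ (hence $a_n = 0$ on taking $f$ a nonzero constant), and for $n \in G_e$ the pointwise relation $a_n(x)\bigl(f(x) - f(s(n)(x))\bigr) = 0$ for all $f \in C(e)$. If $s(n)(x)\neq x$, then since $s(e)$ is Hausdorff and continuous functions separate the points of a locally compact Hausdorff space, one can choose $f \in C(e)$ with $f(x)\neq f(s(n)(x))$, forcing $a_n(x)=0$; the converse inclusion is immediate.

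Next I would translate this into a condition on fixed-point sets. For $e \in \ob(G)$ and $n \in G_e$, set $F_n = \{x \in s(e) : s(n)(x) = x\}$, a closed subset of $s(e)$. By the description above, $C_R(A) = A$ if and only if for every $e$ and every non-identity $n \in G_e$ there is no nonzero $a \in C(e)$ with $\Supp(a) \subseteq F_n$. Such a nonzero $a$ exists precisely when $F_n$ has nonempty interior: if $a \neq 0$ then $\{x : a(x)\neq 0\}$ is a nonempty open subset of $F_n$, and conversely, given a point in the interior of $F_n$, Urysohn's lemma for locally compact Hausdorff spaces produces a nonzero (compactly supported) continuous function supported inside that interior. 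Hence $A$ is maximal commutative in $R$ if and only if $F_n$ has empty interior for every $e \in \ob(G)$ and every non-identity $n \in G_e$. The implication (i) $\Rightarrow$ (ii) is then clean: every aperiodic point of $s(e)$ lies outside each such $F_n$, so if the aperiodic points are dense, the closed set $F_n$ has empty interior, and maximal commutativity follows.

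For (ii) $\Rightarrow$ (i) I would argue by contraposition. If $s$ is not topologically free, there is $e \in \ob(G)$ and a nonempty open set $U \subseteq s(e)$ all of whose points are periodic, so $U \subseteq \bigcup_{n \in G_e \setminus \{e\}} F_n$; since $s(e)$ is locally compact Hausdorff, hence a Baire space, a Baire category argument yields a single non-identity $n \in G_e$ for which $F_n$ has nonempty interior, so $A$ is not maximal commutative. The main obstacle is exactly this last step: passing from "the union of the $F_n$ has interior" to "some individual $F_n$ has interior" needs the Baire property (this is where local compactness truly enters, alongside Urysohn's lemma above) and, strictly, a countability assumption on the isotropy groups $G_e$ — which holds in all applications made in this paper, and in its absence one should read topological freeness as the fixed-point condition "$F_n$ has empty interior for every non-identity $n$", the formulation adopted in \cite{pajo11}.
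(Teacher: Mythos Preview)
The paper does not supply its own argument here; it simply cites \cite[Theorem~3.2]{pajo11}. Your direct proof is the standard one and is, modulo the point you yourself flag, correct: the commutant computation is accurate (using that $C(e)$ is unital and that continuous functions on a locally compact Hausdorff space separate points), and the reformulation ``$A$ is maximal commutative in $R$ if and only if each fixed-point set $F_n$, $n\in G_e\setminus\{e\}$, has empty interior'' is exactly what the referenced result establishes. Your (i)$\Rightarrow$(ii) direction is clean.

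For (ii)$\Rightarrow$(i) you are right that the Baire step, as written, needs each $G_e$ to be countable; without that hypothesis the two readings of topological freeness (``aperiodic points dense'' versus ``each individual $F_n$ has empty interior'') need not coincide, and it is the latter that is literally equivalent to maximal commutativity. This is a genuine subtlety in the literature rather than an error on your part, and your diagnosis is accurate: in \cite{pajo11} the result is effectively proved in the ``empty-interior'' formulation, and in every application in the present paper the isotropy groups are countable (indeed, in Theorem~\ref{genmaintheorem}(c) each $G_e\cong\Z$), so the Baire argument goes through. Your proposal may therefore be accepted as a faithful unpacking of the cited proof, with the countability caveat noted.
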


\begin{proof}
See the proof of \cite[Theorem 3.2]{pajo11}.
\end{proof}

\begin{lem}\label{image}
Suppose that $X$ and $Y$ are topological spaces
and $A \subseteq X$ and $B \subseteq Y$.
If $f : X \rightarrow Y$ is a continuous function
such that $f(A) \subseteq B$,
then $f \left( \overline{A} \right) \subseteq \overline{B}$.
\end{lem}

\begin{proof}
The inclusion $f(A) \subseteq B$ can equivalently be
stated as the inclusion $A \subseteq f^{-1}(B)$.
Since $f$ is continuous we get that
$f^{-1} \left( \overline{B} \right)$ is a closed subset
of $X$ containing $A$ and hence also $\overline{A}$,
i.e.
$\overline{A} \subseteq f^{-1} \left( \overline{B} \right)$
or equivalently
$f \left( \overline{A} \right) \subseteq \overline{B}$.
\end{proof}

\begin{prop}\label{minimalimpliesweaklytopfree}
If $s : G \rightarrow \Top^{\op}$ is a minimal
category dynamical system such that for each
$e \in \ob(G)$ the topological space $s(e)$ is
infinite and Hausdorff and $G_e$ is isomorphic
to the additive group of integers, 
then $s$ is topologically free and,
hence, faithful.
\end{prop}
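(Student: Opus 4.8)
The plan is to reduce the statement to the classical dynamical fact that a minimal homeomorphism of an infinite Hausdorff space has no periodic points, and then to upgrade ``no periodic points'' to topological freeness (density of aperiodic points is automatic when \emph{every} point is aperiodic). Fix $e \in \ob(G)$ and write $X = s(e)$. By hypothesis $G_e \cong \Z$, so pick a generator $g$ of $G_e$; then $t := s(g) : X \to X$ is a continuous map, and since $g$ is invertible in the group $G_e$, $t$ is a homeomorphism with $t^{-1} = s(g^{-1})$. The functoriality relations from Remark \ref{intermsofmaps} (phrased for $s$ rather than $\sigma$, via the definition of a category dynamical system) give $s(g^n) = t^n$ for all $n \in \Z$, and $s(e) = \id_X$. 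Thus a point $x \in X$ is periodic in the sense of the paper precisely when $t^n(x) = x$ for some nonzero $n \in \Z$.

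Next I would show that minimality of $s$ at $e$ forces $t$ to have no periodic points. Suppose $t^n(x) = x$ for some $x \in X$ and some $n \geq 1$; I claim the finite orbit $O = \{x, t(x), \dots, t^{n-1}(x)\}$ is a nonempty $G_e$-invariant subset of $X$. Invariance under $t$ is immediate, hence invariance under $t^k = s(g^k)$ for all $k$, hence $s(m)(O) \subseteq O$ for every $m \in G_e$. Since $X$ is Hausdorff, the finite set $O$ is closed. By minimality $O$ is not a proper subset, so $O = X$; but then $X$ is finite, contradicting the hypothesis that $s(e)$ is infinite. Therefore no such $x$ exists, i.e.\ every point of $X$ is aperiodic. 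In particular the set of aperiodic points of $s(e)$ is all of $s(e)$, which is trivially dense in $s(e)$. Since $e \in \ob(G)$ was arbitrary, $s$ is topologically free.

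Finally, faithfulness follows from topological freeness: given $e \in \ob(G)$ and a nonidentity $n \in G_e$, density of aperiodic points means there is at least one aperiodic $x \in s(e)$, and aperiodicity of $x$ says $s(m)(x) \neq x$ for \emph{every} nonidentity $m \in G_e$, in particular for $m = n$. (Here one uses that $s(e)$ is nonempty, which holds because it is infinite.) Hence $s$ is faithful.

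The only genuinely delicate point is bookkeeping with the contravariant functor $s : G \to \Top^{\op}$: one must be careful that composition in $G_e$ corresponds to composition of the maps $s(\cdot)$ in the opposite order, so that $s(g^n) = t^n$ rather than some twisted version — but since we work with the single generator $g$ and its powers, this amounts only to checking $s(g^2) = s(g) \circ s(g)$, which is exactly the identity displayed in Section \ref{catdynsyst}. Everything else is a direct application of the Hausdorff hypothesis (finite sets are closed) and the definitions of periodic, minimal, topologically free, and faithful given in the introduction. No hard analysis or algebra is needed; the ``infinite'' hypothesis is used exactly once, to rule out $O = X$.
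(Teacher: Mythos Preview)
Your proof is correct and follows essentially the same approach as the paper: both show that every point of $s(e)$ is aperiodic by arguing that a periodic point would yield a finite $G_e$-invariant subset, which (being closed in a Hausdorff space) would contradict minimality of the infinite space. Your version is marginally more direct in that you apply minimality straight to the finite orbit, whereas the paper first shows every orbit closure equals $s(e)$ (via Lemma~\ref{image}) and then deduces the orbit is infinite; but this is only a reordering of the same deductions.
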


\begin{proof}
Take $e \in  \ob(G)$.
We will show that \emph{every} point in $s(e)$ is aperiodic,
which, of course, implies that
the set of aperiodic points in $s(e)$ is dense.
Take $x \in s(e)$. Then $G_e(x)$ is a nonempty
$G_e$-invariant subset of $s(e)$.
By Lemma \ref{image} the set $\overline{G_e(x)}$ is a
nonempty closed $G_e$-invariant
subset of $X$, which, since $s$ is minimal, implies
that $\overline{G_e(x)} = s(e)$.
Since $s(e)$ is infinite and Hausdorff it follows that
$G_e(x)$ is infinite.
Let $g$ be a generator for $G_e$.
Seeking a contradiction, suppose that
there is a nonzero integer $N$ such that $s(g^N)(x)=x$.
Then the cardinality of $G_e(x)$ is less than
or equal to $|N|$. This contradicts the fact that
$G_e(x)$ is an infinite set.
Therefore, $s(g^N)(x) \neq x$, for all nonzero integers
$N$, and hence $x$ is an aperiodic point in $s(e)$.
\end{proof}

Recall that a topological space $X$ is called
\emph{completely regular} if given any closed 
proper subset $F$ of $X$,
there is a nonzero continuous complex valued function 
on $X$ that vanishes on $F$.

\begin{lem}\label{compactlemma}
Every compact Hausdorff topological space
is completely regular.
\end{lem}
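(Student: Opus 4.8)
The plan is to reduce the statement to the classical fact that a compact Hausdorff space is normal, combined with Urysohn's lemma. Let $X$ be compact Hausdorff and let $F$ be a closed proper subset of $X$. Since $F$ is proper, I would fix a point $x \in X \setminus F$. Because $X$ is Hausdorff, the singleton $\{ x \}$ is closed, and it is disjoint from $F$.

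Next I would invoke normality of $X$: a compact Hausdorff space is normal, so any two disjoint closed subsets can be separated by disjoint open sets. (If one prefers not to quote this, it follows by the usual compactness argument: first separate $x$ from each point of $F$ by disjoint open sets and use compactness of $F$ to obtain regularity, then iterate to obtain normality.) Applying Urysohn's lemma to the disjoint closed sets $\{ x \}$ and $F$ produces a continuous function $f : X \to [0,1]$ with $f(x) = 1$ and $f(y) = 0$ for all $y \in F$. Regarding $f$ as taking values in $\C$ via the inclusion $[0,1] \subseteq \C$, we obtain a continuous complex valued function on $X$ that vanishes on $F$ and is nonzero, since $f(x) = 1 \neq 0$. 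This is precisely what the definition of complete regularity requires.

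The argument is entirely routine; the only points deserving a word of care are the edge cases. When $F = \emptyset$ the same construction works with any choice of $x \in X$ (equivalently, one may simply take the constant function $1$), and when $X = \emptyset$ there is nothing to prove since $\emptyset$ is not a proper subset of itself. One should also note that "nonzero" here means "not identically zero", which is guaranteed by the value $f(x) = 1$. No genuine obstacle is expected.
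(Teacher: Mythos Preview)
Your argument is correct and is exactly the standard route: compact Hausdorff $\Rightarrow$ normal, then Urysohn's lemma produces the required nonzero continuous function vanishing on $F$. The paper itself does not give a proof at all but simply refers the reader to a standard topology text (Munkres), so your proposal is precisely the kind of argument one would find behind that citation; there is nothing to compare.
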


\begin{proof}
See any standard book on point set topology, e.g. \cite{mun00}.
\end{proof}

\begin{prop}\label{equivalentminimal1}
Suppose that $s : G \rightarrow \Top^{\op}$ is a category
dynamical system with each $s(e)$, for $e \in \ob(G)$,
compact Hausdorff.
If for each $e \in \ob(G)$, the ring $C(e)$
is $G_e$-simple, then $s$ is minimal.
\end{prop}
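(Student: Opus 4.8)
The plan is to prove the contrapositive: assuming $s$ is not minimal, I will produce an object $e \in \ob(G)$ and a nonzero proper $G_e$-invariant ideal of $C(e)$, contradicting $G_e$-simplicity. So suppose $s$ is not minimal. By definition, there is some $e \in \ob(G)$ and a nonempty proper closed subset $Y \subseteq s(e)$ with $s(n)(Y) \subseteq Y$ for all $n \in G_e$. Since $s(e)$ is compact Hausdorff, it is completely regular by Lemma \ref{compactlemma}; applied to the proper closed set $Y$, this yields a nonzero $f \in C(e)$ vanishing on $Y$.

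Next I would take $J$ to be the ideal of $C(e)$ consisting of all functions vanishing on $Y$, i.e. $J = \{ g \in C(e) : g|_Y = 0 \}$. This is nonzero because $f \in J$, and it is a proper ideal because $Y$ is nonempty (any $g \in J$ cannot be a unit, as it has a zero). The key point is that $J$ is $G_e$-invariant: for $n \in G_e$ and $g \in J$, we have $\sigma(n)(g) = g \circ s(n)$, and since $s(n)(Y) \subseteq Y$, for $y \in Y$ we get $(g \circ s(n))(y) = g(s(n)(y)) = 0$ because $s(n)(y) \in Y$. Hence $\sigma(n)(g) \in J$, so $\sigma(n)(J) \subseteq J$ for every $n \in G_e$. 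Thus $J$ is a nontrivial $G_e$-invariant ideal of $C(e)$, contradicting the hypothesis that $C(e)$ is $G_e$-simple. Therefore $s$ is minimal.

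The only mild subtlety — and the step I would be most careful about — is the invariance direction: one must check that the functorial action $\sigma(n)(g) = g \circ s(n)$ (precomposition with the continuous map $s(n) : s(e) \to s(e)$, which goes ``the right way'' because $s$ lands in $\Top^{\op}$) genuinely carries $J$ into $J$ using the set-level condition $s(n)(Y) \subseteq Y$. Everything else (nonzeroness via complete regularity, properness via $Y \neq \emptyset$, the fact that $J$ is an ideal) is routine point-set topology and commutative algebra, so I would not belabor those points.
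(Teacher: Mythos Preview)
Your proof is correct and follows essentially the same approach as the paper: both prove the contrapositive by taking the vanishing ideal $I_Y = \{ g \in C(e) : g|_Y = 0 \}$ of a nonempty proper closed $G_e$-invariant subset $Y \subseteq s(e)$, and use complete regularity of the compact Hausdorff space $s(e)$ (Lemma~\ref{compactlemma}) to see that this ideal is nonzero. Your write-up is in fact more explicit than the paper's about verifying $G_e$-invariance via $\sigma(n)(g) = g \circ s(n)$ and $s(n)(Y) \subseteq Y$, which is the only point that merits any care.
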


\begin{proof}
We show the contrapositive statement.
Suppose that $s$ is not minimal.
Then there is $e \in \ob(G)$ such that
$s(e)$ is not $G_e$-minimal, that is there is a
closed nonempty proper $G_e$-invariant subset
$Y$ of $s(e)$. Define $I_Y$ to be the set of $f \in C(e)$
that vanish on $Y$.
It is clear that $I_Y$ is a $G_e$-invariant proper ideal of $C(e)$.
By Lemma \ref{compactlemma}, it follows that
$I_Y$ is also nonzero. Therefore,
$C(e)$ is not $G_e$-simple.
\end{proof}

\begin{prop}\label{equivalentminimal2}
Suppose that $s : G \rightarrow \Top^{\op}$ is a category
dynamical system with each $s(e)$, for $e \in \ob(G)$,
compact.
If $s$ is minimal, then 
for each $e \in \ob(G)$, the ring $C(e)$
is $G_e$-simple.
\end{prop}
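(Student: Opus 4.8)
The plan is to prove the contrapositive: assuming that $C(e)$ fails to be $G_e$-simple for some $e \in \ob(G)$, I would produce a nonempty proper closed $G_e$-invariant subset of $s(e)$, thereby contradicting minimality of $s$. So fix $e \in \ob(G)$, write $X = s(e)$, and let $J$ be a nontrivial (that is, proper and nonzero) $G_e$-invariant ideal of $C(e) = C(X)$. The natural object to attach to $J$ is its common zero set $Z(J) = \{ x \in X : f(x) = 0 \text{ for all } f \in J \}$, and the task is to verify that $Z(J)$ is closed, proper, nonempty, and $G_e$-invariant.

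That $Z(J)$ is closed is immediate, since it is the intersection of the closed sets $f^{-1}(\{0\})$ over $f \in J$. That $Z(J) \neq X$ follows from $J \neq \{0\}$: any nonzero $f \in J$ is nonvanishing at some point of $X$. The $G_e$-invariance of $Z(J)$ is a short computation with $\sigma$: for $x \in Z(J)$, $n \in G_e$ and $f \in J$, the function $\sigma(n)(f) = f \circ s(n)$ again lies in $J$ because $J$ is $G_e$-invariant, and therefore $f(s(n)(x)) = (f \circ s(n))(x) = 0$; since $f \in J$ was arbitrary this gives $s(n)(x) \in Z(J)$, i.e. $s(n)(Z(J)) \subseteq Z(J)$ for all $n \in G_e$. (Note that no inverses are used here, so it is irrelevant that $G_e$ is only a monoid.)

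The one step where the compactness hypothesis is genuinely needed — and the only nonformal point — is showing $Z(J) \neq \emptyset$. Here I would argue by contradiction: if $Z(J) = \emptyset$, then for each $x \in X$ choose $f_x \in J$ with $f_x(x) \neq 0$ and set $g_x = f_x \overline{f_x} \in J$, where $\overline{f_x}$ denotes the pointwise complex conjugate; then $g_x$ is a nonnegative continuous function with $g_x(x) > 0$. The sets $\{ y \in X : g_x(y) > 0 \}$ form an open cover of $X$, so by compactness finitely many of them suffice, say for $x_1, \dots, x_k$, and then $g := g_{x_1} + \cdots + g_{x_k} \in J$ is strictly positive on all of $X$. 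Being continuous and positive on a compact space, $g$ is bounded away from $0$, hence $1/g \in C(X)$ and $1_{C(X)} = g \cdot (1/g) \in J$, contradicting that $J$ is proper. Therefore $Z(J) \neq \emptyset$.

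Combining these observations, $Z(J)$ is a nonempty proper closed subset of $s(e)$ with $s(n)(Z(J)) \subseteq Z(J)$ for every $n \in G_e$, so $s$ is not minimal; this establishes the contrapositive and finishes the proof. The only real obstacle is the nonemptiness of $Z(J)$, and it is precisely there that compactness is invoked; everything else is a direct unwinding of the definitions.
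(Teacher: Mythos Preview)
Your proof is correct and follows essentially the same route as the paper's: both argue by contrapositive, attach to a nontrivial $G_e$-invariant ideal its common zero set, and use compactness together with a finite sum of functions of the form $f\overline{f}$ to show this zero set is nonempty. The only cosmetic difference is that the paper extracts a finite subset $J$ of the ideal with empty common zero set directly from compactness and then forms $F=\sum_{f\in J} f\overline{f}$, whereas you first pick one witnessing $f_x$ per point and then pass to a finite subcover; these are the same argument.
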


\begin{proof}
We show the contrapositive statement.
Suppose that there is $e \in \ob(G)$
such that $C(e)$ is not $G_e$-simple.
Then there is a nontrivial $G_e$-invariant
ideal $I$ of $C(e)$.
For a subset $J$ of $C(e)$,
let $N_J$ denote the set 
$\bigcap_{f \in J} f^{-1}( \{ 0 \} )$.
We claim that $N_I$ is a closed, nonempty
proper $G_e$-invariant subset of $C(e)$.
If we assume that the claim holds,
then $s$ is not minimal and the proof is done.
Now we show the claim.
Since $I$ is $G_e$-invariant
the same is true for $N_I$.
Since $I$ is nonzero it follows that $N_I$
is a proper subset of $s(e)$.
Since each set $f^{-1}( \{ 0 \} )$, for $f \in I$,
is closed, the same is true for $N_I$.
Seeking a contradiction, suppose that $N_I$ is empty.
Since $C(e)$ is compact,
there is a finite subset $J$ of $I$ such that
$N_J = N_I$. Then the function
$F = \sum_{f \in J} |f|^2 =
\sum_{f \in J} f \overline{f}$ belongs to $I$
and, since $N_J$ is empty, 
it has the property that $F(x) \neq 0$ for all $x \in s(e)$.
Therefore $1_e = F \cdot \frac{1}{F} \in I$,
where $1_e$ denotes the constant map 
$s(e) \rightarrow {\Bbb C}$ which
sends each element of $s(e)$ to 1. 
This implies that $I = C(e)$
which is a contradiction.
Therefore $N_I$ is nonempty.
\end{proof}

\begin{prop}\label{faithfullocallytrivial}
If $s : G \rightarrow \Top^{\op}$ is a category
dynamical system with each $s(e)$, for $e \in \ob(G)$,
compact Hausdorff, then $s$ is faithful if and only 
if $\ker(\sigma)$ is locally trivial.
\end{prop}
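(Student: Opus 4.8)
The plan is to prove the two implications separately; both pivot on one elementary fact, namely that for a compact Hausdorff space $X = s(e)$ the continuous complex-valued functions on $X$ separate points. Indeed, given distinct $p,q \in X$, the singleton $\{q\}$ is closed because $X$ is Hausdorff, so Lemma~\ref{compactlemma} (equivalently, Urysohn's lemma, since compact Hausdorff spaces are normal) supplies $f \in C(e)$ with $f(p) \neq f(q)$. Combining this with the defining relation $\sigma(n)(f) = f \circ s(n)$ and with $\sigma(\id_e) = \id_{C(e)}$, $s(\id_e) = \id_{s(e)}$ (recorded before Proposition~\ref{equivalentpajo11}), one gets the key equivalence on which both implications rest: for $n \in G_e$, the map $\sigma(n)\colon C(e) \to C(e)$ equals $\id_{C(e)}$ if and only if $s(n) = \id_{s(e)}$. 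One direction is immediate from $\sigma(n)(f) = f \circ s(n)$; for the other, suppose $s(n)(x) \neq x$ for some $x \in s(e)$ and apply the separation property with $p = s(n)(x)$ and $q = x$ to obtain $f \in C(e)$ with $\sigma(n)(f)(x) = f(s(n)(x)) \neq f(x)$, so $\sigma(n) \neq \id_{C(e)}$.

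Granting this, the implication ``$\ker(\sigma)$ locally trivial $\Rightarrow$ $s$ faithful'' is direct. Fix $e \in \ob(G)$ and a nonidentity $n \in G_e$. Since $\ker(\sigma)_{e,e}$ is the equality relation on $G_e$ and $n \neq \id_e$, the pair $(n,\id_e)$ does not lie in $\ker(\sigma)_{e,e}$, i.e.\ $\sigma(n) \neq \sigma(\id_e) = \id_{C(e)}$. Hence there is $f \in C(e)$ with $f \circ s(n) = \sigma(n)(f) \neq f$, and therefore $x \in s(e)$ with $s(n)(x) \neq x$. Thus $s$ is faithful; note that this half uses neither compactness nor Hausdorffness of the spaces $s(e)$.

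For the reverse implication I would argue by contraposition: assuming $\ker(\sigma)$ is not locally trivial, I must exhibit some $e \in \ob(G)$ and some nonidentity $n \in G_e$ with $s(n) = \id_{s(e)}$. By hypothesis there are $e$ and $m \neq n$ in $G_e$ with $\sigma(m) = \sigma(n)$, and the point is to convert this into a single nonidentity morphism that $\sigma$ — equivalently, by the key equivalence, $s$ — sends to the identity. When $G_e$ is a group, in particular when $G$ is a groupoid, this is clean: put $k := mn^{-1}$, which is distinct from $\id_e$ because $m \neq n$; then $\sigma(k) = \sigma(m)\sigma(n)^{-1} = \id_{C(e)}$, so $s(k) = \id_{s(e)}$ by the key equivalence and $s$ fails to be faithful at $e$. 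I expect this last step — upgrading ``two distinct parallel morphisms with the same image'' to ``one nonidentity morphism acting as the identity'' — to be the main obstacle, since it is precisely where invertibility in $G_e$ is used; for a general category it only yields that the $\ker(\sigma)_{e,e}$-class of $\id_e$ is trivial, which is the form of local triviality actually needed in the applications. The surrounding ingredients — point separation on the compact Hausdorff $s(e)$ and $\sigma(\id_e) = \id_{C(e)}$ — are routine.
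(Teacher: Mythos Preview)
Your approach matches the paper's: both directions are run through the equivalence $\sigma(n)=\id_{C(e)}\iff s(n)=\id_{s(e)}$, obtained from Urysohn's lemma (point separation on compact Hausdorff spaces) together with $\sigma(n)(f)=f\circ s(n)$. The paper argues both implications by contraposition while you do one directly, but the content is identical.

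Your flagged concern about the reverse implication is well taken, and in fact the paper's own proof makes precisely the jump you isolate without justification: from ``$\ker(\sigma)$ not locally trivial'' (which, per the paper's stated definition, means $\sigma(m)=\sigma(n)$ for some $m\neq n$ in some $G_e$) it passes immediately to ``there is a nonidentity $n\in G_e$ with $\sigma(n)=\id_{C(e)}$''. Your cancellation $k=mn^{-1}$ is exactly how one closes this when $G_e$ is a group, and since every application in the paper is to groupoids, nothing downstream is affected. For an arbitrary category the equivalence as literally stated does fail: e.g.\ let $G_e$ be the free monoid on two generators $a,b$, set $s(e)=[0,1]$ and send both $a$ and $b$ to $x\mapsto x/2$; then $s$ is faithful (no nonidentity word acts as the identity) yet $\sigma(a)=\sigma(b)$, so $\ker(\sigma)_{e,e}$ is not the equality relation. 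What both you and the paper actually prove, and what is used in Proposition~\ref{estonia} and Theorem~\ref{genmaintheorem}, is the weaker condition that the $\ker(\sigma)_{e,e}$-class of $\id_e$ is $\{\id_e\}$ for each $e$. So your proof is correct in the groupoid case and your diagnosis of the general case is sharper than the paper's.
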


\begin{proof}
Suppose that $s$ is not faithful.
Then there is $e \in \ob(G)$ and a 
nonidentity $n \in G_e$ such that 
$s(n) = \id_{s(e)}$. This implies that
$\sigma(n) = \id_{C(e)}$ and hence that
$\sigma$ is not locally trivial.

Suppose that $\sigma$ is not locally trivial.
Then there is $e \in \ob(G)$ and a nonidentity
$n \in G_e$ such that $\sigma(n) = \id_{C(e)}$.
This implies that $f(s(n)(x)) = f(x)$ for all
$f \in C(e)$ and all $x \in s(e)$.
By Urysohn's lemma (see any standard book 
on point set topology, e.g. \cite{mun00}), 
the set of continuous complex valued
functions on a compact Hausdorff
space separates points. Hence, we get that
$s(n)(x) = x$ for all $x \in s(e)$.
Therefore $s$ is not faithful.
\end{proof}

\noindent {\bf Proof of Theorem \ref{genmaintheorem}.}
Let $R$ denote  $[ \oplus_{e \in \ob(G)} C(e) ] \rtimes^\sigma  G$.

(a) Suppose that $R$ is simple.
By Proposition \ref{simplicity}(a) it follows that 
$G$ is inverse connected and that
each $C(e)$, for $e \in \ob(G)$, is $G_e$-simple.
By Proposition \ref{equivalentminimal1}, 
this implies that $s$ is minimal.
By Proposition \ref{simplicity}(a) 
again it follows that $\ker(\sigma)$
is locally trivial.
By Proposition \ref{faithfullocallytrivial}
we get that $s$ is faithful.

(b) By (a) we only need to show the ''if'' statement.
Suppose that $G$ is a locally abelian inverse connected groupoid
and that $s$ is minimal and faithful.
We show that $R$ is simple.
By Proposition \ref{equivalentminimal2} it follows
that each $C(e)$, for $e \in \ob(G)$,
is $G_e$-simple
and by Proposition \ref{faithfullocallytrivial} we conclude that $\ker(\sigma)$ is locally trivial.
Take $e \in \ob(G)$. 
We claim that $Z(C(e) \rtimes^{\sigma} G_e)$
is a field. If we assume that the claim holds,
then, by Proposition \ref{simplicity}(b), 
it follows that $R$ is simple.
Now we show the claim.
We will in fact show that 
$Z(C(e) \rtimes^{\sigma} G_e) = {\Bbb C}$
where we identify a complex number $z$
with the constant function $1_z$
in $C(e)$ that maps each element of $s(e)$ to $z$.
It is clear that 
$Z(C(e) \rtimes^{\sigma} G_e) \supseteq {\Bbb C}$.
Now we show the inclusion
$Z(C(e) \rtimes^{\sigma} G_e) \subseteq {\Bbb C}$.
Take $\sum_{n \in G_e} f_n u_n \in Z(C(e) \rtimes^{\sigma} G_e)$ 
where $f_n \in C(e)$, for $n \in G(e)$,
and $f_n = 0$ for all but finitely many $n \in G_e$.
For every $m \in G_e$, the equality
$u_m \left( \sum_{n \in G_e} f_n u_n \right) 
= \left( \sum_{n \in G_e} f_n u_n \right) u_m$
holds. From the fact that $G_e$ is abelian,
we get that $f_n(s(m)(x)) = f_n(x)$,
for $m,n \in G_e$ and $x \in s(e)$.
For every $n \in G_e$ choose a complex number
$z_n$ in the image of $f_n$.
Since $f_n \circ s(m) = f_n$ it follows that
the set $f_n^{-1}(z_n)$ is a nonempty
$G_e$-invariant closed subset of $s(e)$.
Since $s$ is minimal it follows that 
$f_n^{-1}(z_n) = s(e)$ and hence that 
$f_n = 1_{z_n}$, for $n \in G_e$.
Take a nonidentity $m \in G_e$.
From the fact that $s$ is faithful,
we get that there is $a \in s(e)$
such that $s(m)(a) \neq a$.
Since $C(e)$ separates the points in $s(e)$
there is $g \in C(e)$ such that
$g(a) \neq g(s(m)(a))$.
Since $\sum_{n \in G_e} 1_{z_n} u_n$
commutes with $g$ we get that
$1_{z_m}(x) ( g(x) - \sigma(m)(g)(x) ) = 0$
for all $x \in s(e)$. By specializing
this equality with $x=a$, we get that
$z_m (g(a) - g(s(m)(a)) = 0$
which in turn implies that $z_m = 0$.
Therefore, the inclusion 
$Z(C(e) \rtimes^{\sigma} G_e) \subseteq {\Bbb C}$ holds.

(c) We can show this in two ways.
Either, we use Theorem \ref{genmaintheorem}(b) 
and the faithful part of 
Proposition \ref{minimalimpliesweaklytopfree},
or we can construct a direct proof (similar
to the proof of (b) above) using
Proposition \ref{simplicitycommutativeA},
the topologically free part of
Proposition \ref{minimalimpliesweaklytopfree}
and Proposition \ref{equivalentpajo11}.

\hfill $\square$

\begin{rem}
If we omit the condition that $s$ is faithful,
then the conditions (ii) and (iii) 
in Theorem \ref{genmaintheorem} do not
necessarily imply that 
$[ \oplus_{e \in \ob(G)} C(e) ] \rtimes^\sigma  G$ is simple.
In fact, let $G$ and $H$ be any nontrivial groups.
By abuse of notation, we let $e$ denote
the identity element of both groups.
Suppose that $X$ is a compact Hausdorff space
equipped with a minimal $G$-action
$G \times X \ni (g,x) \mapsto g(x) \in X$.
Define an action of $G \times H$ on $X$
by the relation 
$G \times H \times X \ni (g,h,x) \mapsto g(x) \in X$;
this action is also minimal.
Then $C(X) \rtimes^{\sigma} (G \times H)$ 
is not simple.
In fact, let $I$ be the ideal generated by 
the set of elements of the form $u_{(e,e)} - u_{(e,h)}$,
for $h \in H$.
Define the homomorphism of abelian groups
$\varphi : C(X) \rtimes^{\sigma} (G \times H) \rightarrow C(X)$
by the additive extension of the relation
$\varphi( f u_{(g,h)} ) = f$, for $g \in G$, 
$h \in H$ and $f \in C(X)$.
We claim that $I \subseteq \ker(\varphi)$.
If we assume that the claim holds, then
$I$ is a nontrivial ideal of 
$C(X) \rtimes^{\sigma} (G \times H)$,
since $\varphi |_{C(X)} = {\rm id}_{C(X)}$.
Now we show the claim.
By the definition of $I$ it follows that
it is enough to show that $\varphi$
maps elements of the form
$f_1 u_{(r,s)} ( u_{(e,e)} - u_{(e,h)} ) f_2 u_{(t,v)}$ to zero,
where $f_1,f_2 \in C(X)$, $r,t \in G$ and $s,h,v \in H$.
However, since $\sigma(e,h)(f_2) = f_2$, we get that
$f_1 u_{(r,s)} ( u_{(e,e)} - u_{(e,h)} ) f_2 u_{(t,v)} =
f_1 \sigma(r,s)(f_2)( u_{(rt,sv)} - u_{(rt,shv)})$
which, obviously, is mapped to zero by $\varphi$.
\end{rem}

\begin{rem}
If we omit the condition that $G$ is locally abelian
in Theorem \ref{genmaintheorem}, then 
the conditions (ii), (iii) and (iv) do not 
necessarily imply that 
$[ \oplus_{e \in \ob(G)} C(e) ] \rtimes^\sigma  G$
is simple. 
Indeed, Öinert has given an example of this
phenomenon when $G$ is the nonabelian
group of homeomorphisms of the circle $S^1$
acting on the compact Hausdorff space $S^1$
(for the details, see \cite[Example 6.1]{oinertarxiv11}). 
\end{rem}

\begin{defn}\label{partialdefn}
Suppose that $X$ is a topological space.
By a {\it partially defined dynamical system}
on $X$ we mean a collection $P$ of functions
such that:
\begin{itemize}
\item if $f\in P$, then the domain $d(f)$ of $f$
and the codomain $c(f)$ of $f$ are subsets of $X$
and $f$ is continuous as a function
$d(f) \rightarrow c(f)$ where $d(f)$ and $c(f)$ are equipped
with the relative topologies induced by the topology on $X$;

\item if $f \in P$, then $\id_{d(f)} \in P$ and $\id_{c(f)} \in P$; 

\item if $f,g\in P$ are such that $d(f)=c(g)$, then $f \circ g\in P$.
\end{itemize}
We say that an element $x$ of $X$ is {\it periodic} with respect to $P$
if there is a nonidentity function $f$ in $P$
with $d(f)=c(f)$ and $f(x)=x$.
An element $x$ is {\it aperiodic} with respect to $P$
if it is not periodic.
We say that $P$ is \emph{topologically free} if the set
of aperiodic elements of $X$ is dense in $X$.
We say that $P$ is \emph{minimal} if for every
$Y\subseteq X$, satisfying $Y=d(g)$ for some $g\in P$,
there is no nonempty proper closed subset $S\subseteq Y$
such that $f(S)\subseteq S$ for all $f\in P$ satisfying $d(f)=c(f)=Y$.
We say that $P$ is {\it faithful} if given
a nonidentity function $f \in P$ with 
$d(f)=c(f)$, then there is $x \in d(f)$
such that $f(x) \neq x$.
By abuse of notation, we let $P$ denote the category
having the domains and codomains of functions in $P$
as objects and the functions of $P$ as morphisms.
We let the obvious functor $P \rightarrow \Top$
be denoted by $t_P$. Let $G_P$ denote the opposite
category of $P$ and let $s_P : G_P \rightarrow \Top^{\op}$
denote the opposite functor of $t_P$.
We will call $s_P$ the \emph{category dynamical system on $X$
defined by the partially defined dynamical system $P$}.
\end{defn}

\begin{prop}
If $X$ is a topological space and $P$ is a
partially defined dynamical system on $X$,
then $P$ is topologically free (minimal, faithful) 
if and only if
$s_P$ is topologically free (minimal, faithful) 
as a category dynamical system.
\end{prop}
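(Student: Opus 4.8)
The plan is to unwind the definitions on both sides and check that the three notions of topological freeness, minimality, and faithfulness for the partially defined dynamical system $P$ match the corresponding notions for the category dynamical system $s_P : G_P \to \Top^{\op}$ object by object. Recall that $G_P$ is the opposite of the category $P$, whose objects are the domains and codomains of functions in $P$ (subsets of $X$ with the relative topology) and whose morphisms are the functions of $P$; under this identification, for an object $Y$ of $G_P$ the space $s_P(Y)$ is precisely $Y$ with its relative topology, and for a morphism $f$ of $G_P$ the map $s_P(f)$ is just $f$ itself as a continuous function. The group $(G_P)_Y$ of endomorphisms of $Y$ in $G_P$ consists of those $f \in P$ with $d(f)=c(f)=Y$, and a nonidentity such $f$ corresponds to a nonidentity morphism $Y \to Y$ in $G_P$.

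\textbf{Faithfulness.} First I would handle faithfulness, as it is the most immediate. By definition $P$ is faithful iff for every nonidentity $f \in P$ with $d(f)=c(f)=Y$ there is $x \in Y$ with $f(x) \neq x$; this is word-for-word the statement that for every object $Y = s_P(Y)$ and every nonidentity $n \in (G_P)_Y$ there is $x \in s_P(Y)$ with $s_P(n)(x) \neq x$, i.e.\ that $s_P$ is faithful.

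\textbf{Minimality.} Next I would treat minimality, which requires a small bookkeeping remark. The system $s_P$ is minimal iff for each object $Y$ there is no nonempty proper closed $S \subseteq s_P(Y) = Y$ with $s_P(n)(S) \subseteq S$ for all $n \in (G_P)_Y$; translating morphisms back to $P$, this says exactly that there is no nonempty proper closed $S \subseteq Y$ with $f(S) \subseteq S$ for all $f \in P$ with $d(f)=c(f)=Y$. On the $P$ side, the definition quantifies over subsets $Y \subseteq X$ of the form $Y = d(g)$ for some $g \in P$; but the objects of $G_P$ are precisely the domains (equivalently, since $\id_{d(g)}\in P$, also the codomains) of functions in $P$, so the two families of subsets being quantified over coincide, and the two minimality conditions are identical.

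\textbf{Topological freeness.} Finally I would do topological freeness, which is again essentially definitional once one notes that the notion of periodic point agrees on both sides: $x \in Y$ is periodic with respect to $P$ iff there is a nonidentity $f \in P$ with $d(f)=c(f)=Y$ and $f(x)=x$, which is exactly the condition that $x \in s_P(Y)$ is periodic for $s_P$ (a nonidentity $n : Y \to Y$ in $G_P$ with $s_P(n)(x)=x$). Hence the aperiodic points of $Y$ with respect to $P$ coincide with the aperiodic points of $s_P(Y)$, and ``the aperiodic points are dense in $X$'' is equivalent to ``for each object $Y$ the aperiodic points are dense in $s_P(Y)$''—here one uses that $X$ is itself an object of $G_P$ whenever $P$ is nonempty (as $\id_X$ need not be in $P$, one should be slightly careful: the intended reading is that density is tested on each relevant subset $Y$, and the $P$-side density in $X$ is the special case $Y = X$ together with the observation that density on a subspace is inherited). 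I do not expect any real obstacle here; the only point needing a word of care is the matching of the index families (subsets $Y$ versus objects of $G_P$, and opposite-category bookkeeping), which the proof should state explicitly rather than leave to the reader.
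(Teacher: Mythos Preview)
Your treatment of faithfulness and minimality is correct and is exactly what the paper's one-line proof (``follows immediately from the definitions'') is gesturing at: the two sets of conditions are literally the same once one identifies $\ob(G_P)$ with the domains/codomains appearing in $P$ and $(G_P)_Y$ with $\{f\in P : d(f)=c(f)=Y\}$.

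For topological freeness, however, there is a genuine gap in your argument (and, to be fair, the paper's own proof does not address it either). The two definitions are \emph{not} syntactically the same: for $P$ one asks that the aperiodic points be dense in the ambient space $X$, while for $s_P$ one asks, \emph{for each object} $Y$, that the aperiodic-at-$Y$ points be dense in $Y$. Your proposed bridge, ``density on a subspace is inherited,'' is false: a dense subset of $X$ need not meet an arbitrary subspace $Y$ densely. Concretely, take $X=[0,1]$, let $Y=\{0,1\}$, and let $P$ consist of $\id_X$, $\id_Y$, and the constant map $g:Y\to Y$, $g\equiv 0$. Then $\text{Per}_P=\{0\}$, so the $P$-aperiodic set $(0,1]$ is dense in $X$ and $P$ is topologically free; but aperiodic-at-$Y=\{1\}$ is not dense in the discrete space $Y$, so $s_P$ is not topologically free. (A construction with contractions towards a moving fixed point gives a failure in the opposite direction as well.) Thus the step where you pass from ``dense in $X$'' to ``dense in each $s_P(Y)$'' does not go through as written; at best the proposition holds only under an additional hypothesis (e.g.\ that each object is open in $X$, or that $X$ itself is the unique object), and your proof should state and use such a hypothesis rather than appeal to a false inheritance principle.
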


\begin{proof}
This follows immediately from the definition
of topological freeness (minimality, faithfulness) 
of $P$ and $s_P$.
\end{proof}

To illustrate the above definitions and results,
we end the article with some concrete examples of
partially defined dynamical systems.

\begin{exmp}\label{partialexmp}
Suppose that we let $X$ denote the real numbers
equipped with its usual topology
and we let $Y$ denote the set of nonnegative real numbers equipped with
the relative topology induced by the topology on $X$.
Let $\opsqr : X \rightarrow Y$
and $\opsqrt : Y \rightarrow X$ denote the square function
and the square root function, respectively.
Furthermore, let $\opabs : X \rightarrow X$ denote the absolute value.
Let $P$ be the partially defined dynamical system
with $\ob(P) = \{ X,Y \}$ and $\mor(P) = \{ \id_X , \id_Y , \opsqr , \opsqrt , \opabs \}$.
Then we get the following table of partial composition for $P$
$$\begin{array}{c|ccccc}
\circ & \id_X & \id_Y & \opsqr & \opsqrt  & \opabs \\ \hline
\id_X & \id_X & *     & *   & \opsqrt  & \opabs \\
\id_Y & *     & \id_Y & \opsqr & *     & *   \\
\opsqr   & \opsqr   & *     & *   & \id_Y & \opsqr \\
\opsqrt  & *     & \opsqrt  & \opabs & *     & * \\
\opabs   & \opabs   & *     & *   & \opsqrt  & \opabs
\end{array}$$
Put $G = P^{\op}$ and let $A_X$ and $A_Y$
denote the set of continuous complex valued functions 
on $X$ and $Y$ respectively.
Take $f_X , f_X' , g_X , g_X' , h_X , h_X' \in A_X$ and
$f_Y , f_Y' , g_Y , g_Y' \in A_Y$. Then the product of
$$B_1 := f_X u_{\id_X} + g_X u_{\opabs} + h_X u _{\opsqr} + f_Y u_{\id_Y} + g_Y u_{\opsqrt}$$
and
$$B_2 := f_X' u_{\id_X} + g_X' u_{\opabs} + h_X' u _{\opsqr} + f_Y' u_{\id_Y} + g_Y' u_{\opsqrt}$$
in the skew category algebra $A \rtimes^{\sigma} G$ equals
$$
B_1 B_2 = f_X f_X' u_{\id_X} +
\left( f_X g_X' + g_X (f_X' \circ \opabs) + g_X (g_X' \circ \opabs) + h_X (g_Y' \circ \opsqr) \right) u_{\opabs} +$$
$$+ \left( f_X h_X' + h_X (f_Y' \circ \opsqr) + g_X (h_X' \circ \opabs) \right) u_{\opsqr} +
\left( f_Y f_Y' + g_Y (h_X' \circ \opsqrt) \right) u_{\id_Y} +$$
$$ + \left( f_Y g_Y' + g_Y (f_X' \circ \opsqrt) + g_Y (g_X' \circ \opsqrt) \right) u_{\opsqrt}.$$
Now we examine the properties (i)-(iv) in Theorem \ref{genmaintheorem}.

Property (i) is false. 
Indeed, the ideal $I$ of $A \rtimes^{\sigma} G$
generated by $u_{\opabs}$ equals
$$A_X u_{\opabs} + A_X u_{\opsqr} + A_Y u_{\opsqrt}.$$
Hence $I$ is a nontrivial ideal of $A \rtimes^{\sigma} G$,
which, in particular, implies that 
$A \rtimes^{\sigma} G$ is not simple.

By direct inspection of the table
of partial composition for $P$
it follows that property (ii) is false.

Property (iii) is false.
In fact, if we let $S$ be any subset of 
the set of the non-negative real numbers,
then $\opabs(S) = S$.
Hence $P$ is not minimal.

Property (iv) is true. Indeed, the only nonidentity
function in $P$ that has equal domain and codomain
is $\opabs$. But $\opabs(x) = -x \neq x$ for
any negative real number.
\end{exmp}

\begin{rem}
It can be shown (see \cite[Example 28]{pajo11} for the details)
that the partially defined dynamical system in 
Example \ref{partialexmp} is not topologically free. 
\end{rem}

\begin{exmp}\label{examplediscrete}
Let $X$ denote a set equipped with the discrete topology.
We now consider two partially defined dynamical systems $P$ on $X$.

(a) Let $P$ be the partially defined dynamical system on $X$
having one-element subsets of $X$ as objects
and the unique functions between such sets as morphisms.
Now we examine the properties (ii), (iii) and (iv) in Theorem \ref{genmaintheorem}.
It is clear that $P$ is a locally abelian inverse connected
(small) groupoid so (ii) holds.
By the definition of $P$ it follows directly that 
it is both minimal and faithful.
Therefore, by  Theorem \ref{genmaintheorem},
we get that $A \rtimes^\sigma G$ is simple.
We leave it as an exercise to the reader
to show that  $A \rtimes^\sigma G$ is isomorphic as a complex algebra
to the direct limit $\varinjlim M_Y({\Bbb C})$,
where the direct limit is taken over finite subsets
$Y$ of $X$ and we let $M_Y({\Bbb C})$ denote 
the complex subalgebra of $A \rtimes^\sigma G$
generated by elements of the form $u_m$
for $m \in \mor(P)$ with $d(m),c(m) \in Y$.
The maps
$M_Y({\Bbb C}) \rightarrow M_{Y'}({\Bbb C})$,
for finite subsets $Y$ and $Y'$ of $X$
with $Y \subseteq Y'$, 
defining the direct limit,
are defined by sending $u_n$,
for $n \in P$ with $d(n),c(n) \in Y$,
to $u_n$.
If $X$ is a finite set of cardinality $n$,
then it is clear that
$\varinjlim M_Y({\Bbb C})$ is isomorphic
to the ring $M_n({\Bbb C})$ of $n \times n$ 
complex matrices.
In particular, we now retrieve simplicity
of $M_n({\Bbb C})$.

(b) Let $P$ be the partially defined dynamical 
system on $X$ consisting
of {\it all} subsets of $X$ as objects and 
{\it all} maps between such sets as morphisms.
Then $P$ is a small category which is not a groupoid.
Now we examine the properties (i)-(iv) in Theorem \ref{genmaintheorem}.
It is easy to see that $P$ is not topologically free.
By Proposition \ref{equivalentpajo11} and Proposition \ref{simplicitycommutativeA}
we conclude that $A \rtimes^\sigma G$ is not simple, so (i) is false.
By choosing $e,f\in \ob(G)$ (i.e. subsets of $X$)
of different cardinality, it is easy to see that $P$ is not inverse connected,
so (ii) is false.
It is clear that $P$ is minimal and faithful, so (iii) and (iv) are true.
\end{exmp}

\section*{Acknowledgements}
The second author was partially supported by The Swedish Research Council (postdoctoral fellowship no. 2010-918) and
The Danish National Research Foundation (DNRF) through the Centre for Symmetry and Deformation.


\begin{thebibliography}{99}

\bibitem{archbold93}
R. J. Archbold and J. S. Spielberg,
Topologically free actions and ideals
in discrete $C^*$-dynamical systems,
{\it Proc. of Edinburgh Math. Soc.} {\bf 37} (1993),
119--124.

\bibitem{bla06}
B. Blackadar,
{\it Operator Algebras: Theory of $C^*$-algebras and von
Neumann algebras}, Encyclopaedia of Mathematical Sciences, 122.
Operator Algebras and Non-commutative Geometry, III.
Springer-Verlag, Berlin, 2006.

\bibitem{davidson96}
K. R. Davidson, 
{\it $C^*$-algebras by Example}, Memoirs of the American
Mathematical Society, no. 75, American Mathematical Society,
Providence RI, 1996.

\bibitem{effros67}
E. G. Effros and F. Hahn, {\it Locally compact
transformation groups and $C^*$-algebras}, Memoirs of the American
Mathematical Society, no. 75, American Mathematical Society,
Providence RI, 1967.

\bibitem{elliot80}
G. A. Elliott,
Some simple $C^*$-algebras constructed as crossed products
with discrete outer automorphism groups,
{\it Publ. Res. Inst. Math. Sci.}
{\bf 16} (1980), 299--311.

\bibitem{exelvershik06}
R. Exel and A. Vershik,
C*-algebras of irreversible dynamical systems,
{\it Canad. J. Math.} {\bf 58}(1) (2006), 39--63.

\bibitem{kawamura90}
S. Kawamura and J. Tomiyama,
Properties of topological dynamical
systems and corresponding $C^*$-algebras,
{\it Tokyo. J. Math.} {\bf 13} (1990), 251--257.

\bibitem{kishimoto81}
A. Kishimoto,
Outer automorphisms and reduced crossed products of
simple $C^*$-algebras,
{\it Comm. Math. Phys.} {\bf 81} (1981), 429--435.

\bibitem{liu06}
G. Liu and F. Li,
On Strongly Graded Rings and the
Corresponding Clifford Theorem,
{\it Algebra Colloq.} {\bf 13} (2006),
181--196.

\bibitem{lu06}
P. Lundstr\"{o}m, Separable Groupoid Rings,
{\it Comm. Algebra} {\bf 34} (2006), 3029--3041.

\bibitem{pajo11}
P. Lundstr\"{o}m and J. \"{O}inert,
Skew category algebras associated with partially defined dynamical systems,
{\it Internat. J. Math.} {\bf 23}(4) (2012), pp. 16.

\bibitem{masuda1}
T. Masuda,
Groupoid dynamical systems and crossed product. I. The case of W*-systems.
{\it Publ. Res. Inst. Math. Sci.} {\bf 20}(5) (1984), 929--957.

\bibitem{masuda2}
T. Masuda,
Groupoid dynamical systems and crossed product. II. The case of C*-systems.
{\it Publ. Res. Inst. Math. Sci.} {\bf 20}(5) (1984), 959--970.

\bibitem{mun00}
J. R. Munkres,
{\it Topology}, Second Edition,
Prentice-Hall, 2000.

\bibitem{mur36} F. J. Murray and J. von Neumann, On rings of
operators, {\it Ann. of Math.} {\bf 37} (1936), 116--229.

\bibitem{mur43} F. J. Murray and J. von Neumann, On rings of
operators IV, {\it Ann. of Math.} {\bf 44} (1943), 716--808.

\bibitem{von61}
J. von Neumann, Collected works vol. III. Rings
of operators, Pergamon Press, 1961.

\bibitem{oinert09}
J. \"{O}inert,
Simple Group Graded Rings and Maximal Commutativity,
Contemporary Mathematics (American Mathematical Society),
Vol. 503, pp. 159-175 (2009).

\bibitem{oinlun08}
J. \"{O}inert and P. Lundstr\"{o}m,
Commutativity and Ideals in Category Crossed Products,
{\it Proc. Est. Acad. Sci.} {\bf 59}(4) (2010), 338--346.

\bibitem{oinlun10}
J. \"{O}inert and P. Lundstr\"{o}m,
The Ideal Intersection Property for Groupoid Graded Rings,
{\it Comm. Algebra} {\bf 40}(5) (2012), 1860--1871.

\bibitem{oinlunMiyashita}
J. \"{O}inert and P. Lundstr\"{o}m,
Miyashita Action in Strongly Groupoid Graded Rings,
{\it Int. Electron. J. Algebra}, Vol. 11 (2012), 46--63.

\bibitem{oinricsil11}
J. \"{O}inert, J. Richter and S. D. Silvestrov,
Maximal commutative subalgebras and simplicity of Ore extensions.
Preprint available at arXiv:1111.1292v1 [math.RA]

\bibitem{oin06}
J. \"{O}inert and S. D. Silvestrov,
Commutativity and Ideals in Algebraic Crossed Products,
{\it J. Gen. Lie T. Appl.} {\bf 2}, no. 4, 287--302 (2008).

\bibitem{oin07}
J. \"{O}inert and S. D. Silvestrov,
On a Correspondence Between Ideals and Commutativity in
Algebraic Crossed Products, {\it J. Gen. Lie T. Appl.} {\bf 2},
no. 3, 216--220 (2008).

\bibitem{oin08}
J. \"{O}inert and S. D. Silvestrov,
Crossed Product-Like and Pre-Crystalline Graded Rings,
Chapter 24 in 'Generalized Lie Theory in Mathematics, Physics and Beyond', Silvestrov, S.; 
Paal, E.; Abramov, V.; tolin, A. (Eds.), 16pp., Springer (2009).

\bibitem{oin10}
J. \"{O}inert, S. Silvestrov, T. Theohari-Apostolidi and H. Vavatsoulas,
Commutativity and Ideals in Strongly Graded Rings,
{\it Acta Appl. Math.} {\bf 108}, no. 3, 585-602 (2009).

\bibitem{oinertarxiv11}
J. \"{O}inert,
Simplicity of Skew Group Rings of Abelian Groups.
Preprint available at arXiv:1111.7214v2 [math.RA]

\bibitem{paterson99}
A. L. T. Paterson,
{\it Groupoids, inverse semigroups, and their operator algebras},
Progress in Mathematics, 170. Birkhäuser Boston, Inc., Boston, MA, 1999.

\bibitem{pedersen79}
G. K. Pedersen,
{\it $C^*$-algebras and their Automorphism
Groups}, Academic Press, 1979.

\bibitem{power78}
S. C. Power, Simplicity of $C^*$-algebras of
minimal dynamical systems, {\it J. London Math. Soc.}
{\bf 18} (1978), 534-538.

\bibitem{quigg92}
J. C. Quigg and J. S. Spielberg,
Regularity and hyporegularity in $C^*$-dynamical system,
{\it Houston J. Math.} {\bf 18} (1992), 139--152.

\bibitem{spielberg91}
J. S. Spielberg,
Free-product groups, Cuntz-Krieger algebras, and
covariant maps,
{\it Internat. J. Math.} {\bf 2} (1991), 457--476.

\bibitem{svesildejeu07}
C. Svensson, S. Silvestrov and M. de Jeu,
Dynamical Systems and Commutants in Crossed Products,
{\it Internat. J. Math.} {\bf 18}(4) (2007), 455--471.

\bibitem{svesildejeu08}
C. Svensson, S. Silvestrov and M. de Jeu,
Connections between dynamical systems and crossed
products of Banach algebras by $\mathbb{Z}$, in
{\it Methods of Spectral Analysis in Mathematical Physics},
Operator Theory: Advances and Applications, Vol. 186,
Birkhauser, 2009, pp. 391--402.

\bibitem{svesildejeu09}
C. Svensson, S. Silvestrov and M. de Jeu,
Dynamical Systems Associated with Crossed Products,
{\it Acta Appl. Math.} {\bf 108} (2009), 547--559.

\bibitem{takesaki03}
M. Takesaki,
{\it Theory of operator algebras. II},
Encyclopaedia of Mathematical Sciences, 125. Operator Algebras and
Non-commutative Geometry, 6. Springer-Verlag, Berlin, 2003.

\bibitem{tomiyama87} J. Tomiyama, Invitation to $C^*$-algebras and
topological dynamics, World Sci., Singapore, New Jersey, Hong Kong,
1987.

\bibitem{tomiyama92} J. Tomiyama, The interplay between topological
dynamics and theory of $C^*$-algebras. Lecture Notes Series, 2.
Global Anal. Research Center, Seoul, 1992.

\bibitem{williams07} 
D. P. Williams, 
Crossed Products of $C^*$-algebras,
Mathematical Surveys and Monographs,
American Mathematical Society,
Province, R. I., 2007.

\bibitem{zel68}
G. Zeller-Meier,
Produits crois\'{e}s d'une $C^*$-alg\`{e}bre par un groupe
d'automorphismes, {\it J. Math. pures et appl.} {\bf 47} (1968), 101--239.

\end{thebibliography}
\end{document}